\makeatletter \@addtoreset{equation}{section} \makeatother
\def\inte#1{
\displaystyle\mathop{#1\kern0pt}^\circ }
\let\S=\Sigma
\def\tilde{\widetilde}
\newcommand{\R}{\mathbb{R}}
\newtheorem{remark}{Remark}[section]
\newtheorem{lemma}{Lemma}[section]
\newtheorem{definition}{Definition}[section]
\newtheorem{theorem}{Theorem}[section]
\newcommand{\beq}{\begin{equation}}
\newcommand{\eeq}{\end{equation}}
\newcommand{\bes}{\begin{eqnarray}}
\newcommand{\ees}{\end{eqnarray}}
\def\com#1{\quad{\textrm{#1}}\quad}
\def\nn{\nonumber}
\def\LC{\left(}
\def\LB{\left[}
\newcommand{\RC}{\right)}
\def\RB{\right]}
\def\LV{\left|}
\def\RV{\right|}
\def\LN{\left\|}
\def\RN{\right\|}
\def\LCB{\left\{}
\def\RCB{\right\}}
\def\C{{\mathcal C}}
\def\L{{\bf L}}
\def\v{\vskip 1em}
\def\ve{\varepsilon}
\def\vp{\varphi}
\def\ds{\displaystyle}
\def\bega{\begin{array}}
\def\enda{\end{array}}
\def\begi{\begin{itemize}}
\def\endi{\end{itemize}}
\def\bel{\begin{equation}\label}
\begin{document}

\title[Global conservative solutions for the Novikov equation]{Existence and uniqueness of the global conservative weak solutions for the integrable Novikov equation}

\author[G. Chen]{Geng Chen}
\address{Geng Chen \newline
School of Mathematics, Georgia Institute of Technology, Atlanta, GA 30332}
\email{gchen73@math.gatech.edu}
\author[R.M. Chen]
{Robin Ming Chen}
\address{Robin Ming Chen\newline
Department of Mathematics\\
University of Pittsburgh\\
Pittsburgh, PA 15260}
\email{mingchen@pitt.edu}
\author[Y. Liu]
{Yue Liu}
\address{Yue Liu \newline
Department of Mathematics, University of Texas at Arlington, Arlington, TX 76019-0408}
\email{yliu@uta.edu}

\date{}
\maketitle

\begin{abstract}
The integrable Novikov equation can be regarded as one of the Camassa-Holm-type  equations with cubic nonlinearity. In this paper, we prove the global existence and uniqueness of the H\"older continuous energy conservative solutions for the Cauchy problem of the Novikov equation.
\end{abstract}

\noindent {\small 2010\textit{\ Mathematical Subject Classification:}  35A01, 35A02, 35D30, 35G20.}

\noindent {\small \textit{Key Words:} Global weak solution, uniqueness, Novikov equation, integrable system, Camassa-Holm equation.}
%
\section{Introduction}
Consideration  here is the initial-value problem  for the Novikov equation in the form
\begin{equation}\label{e1.1}
 u_t-u_{xxt}+4u^2u_x=3uu_xu_{xx}+u^2u_{xxx},\quad x\in
 \mathbb{R},\quad t>0.\end{equation}
This equation was proposed by Novikov \cite{nov} in a symmetry
classification of nonlocal partial differential equations with cubic
nonlinearity. The Novikov equation \eqref{e1.1} is among the class of integrable equations with
the Lax pair given as \cite{nov}
\begin{equation*}
 \begin{split}
 \psi_{xxx} &=\psi_x +\lambda m^2\psi +2\frac{m_x}{m}\psi_{xx}
 +\frac{mm_{xx}-2m_x^2}{m^2}\psi_x,\\
 \psi_t &=\frac{u}{\lambda m}\psi_{xx} -\frac{mu_x+um_x}{m^2}\psi_x
 -u^2\psi_x,
 \end{split}
\end{equation*} where $ m = (1 - \partial_x^2) u. $
A $ 3 \times 3 $  matrix Lax pair
representation to the Novikov equation was derived by Hone and Wang  \cite{hw}.  Indeed, it can be shown
that the Novikov equation is related to a negative
flow in the Sawada-Kotera hierarchy. It is also found that
the Novikov equation admits a bi-Hamiltonian structure \cite {hw} 
\begin{equation*}
 m_t=\mathcal{J}_2\,\frac{\delta H_1}{\delta
 m}=\mathcal{J}_1\,\frac{\delta H_2}{\delta m},
\end{equation*}
with the Hamiltonian operators
\begin{equation*}
 \begin{split}
  \mathcal{J}_2 &=
  -2(3m\partial_x+2m_x)(4\partial_x-\partial_x^3)^{-1}(3m\partial_x+m_x),\\
  \mathcal{J}_1 &=
  (1-\partial_x^2)\frac{1}{m}\partial_x\frac{1}{m}(1-\partial_x^2),
 \end{split}
\end{equation*}
and the corresponding Hamiltonians
\begin{equation*}
 \begin{split}
  H_1 = \frac{1}{3}\,\int (m^{-8/3}m_x^2 +9m^{-2/3})\,dx, \quad {\rm and} \quad
  H_2 = \frac{1}{8}\,\int \left (u^4 +2u^2u_x^2-\frac{1}{3}u_x^4 \right )\,dx.
 \end{split}
\end{equation*}

The relevance of the Novikov equation \eqref{e1.1} to the Camassa-Holm (CH) equation \cite{ch,ff} can be revealed from the following compact form
\begin{equation}\label{b1.1}
 m_t+u^2m_x+\frac 32 (u^2)_xm =0, \quad m
 =u-u_{xx},
\end{equation}
whereas the CH  equation can be written as
\begin{equation}\label{e1.2}
m_t + u m_x + 2 u_x m = 0,\;\; m=u-u_{xx}.
\end{equation}


The classical CH equation \eqref{e1.2}
was originally derived as a model for surface waves, and has been studied extensively in the past two decades because of its many remarkable properties: infinity of conservation laws and complete integrability \cite{ch,ff}, existence of peaked solitons and multi-peakons \cite{ch, cht}, geometric formulations \cite{cq, mis}, well-posedness and  breaking waves, meaning solutions remain bounded while their slope becomes unbounded in finite time \cite{con1,ce-1,ce-2,ce-3,well}.  In particular, breaking waves are commonly observed in the ocean and are important for a variety of reasons, but surprisingly little is known about them. Indeed, breaking waves place large hydrodynamic loads on man-made structure, transfer horizontal momentum to surface currents, provide a source of turbulent energy to mix the upper layers of the ocean, move sediment in shallow water, and enhance the air-sea exchange of gases and particulate matter \cite {Co}. To further  understand why waves break and what happens during and after breaking themselves, we must first investigate the dynamics of wave breaking. Research work on breaking waves can be divided into three categories: those concerning waves (1) before, (2) during, and (3) after breaking.  Although up to now significant advances have been made in understanding the processes leading to the breaking, there are still some aspects of these questions unanswered, in particular, question (3), what happens after breaking of those waves.

Due to the formation of singularities of the strong solutions, it becomes imperative to consider weak solutions. One is now confronted with two major challenges. The first issue concerns the existence of weak solutions. In view of the hyperbolic structure of the equation, the existence theory is fairly robust. For example, in the context of the CH equation, Xin-Zhang \cite{XZ1,XZ2} obtain a class of global weak solutions via a vanishing viscosity approach. Such solutions are {\it diffusive} in nature. On the other hand, to incorporate the peakon-antipeakon interaction, Bressan-Constantin \cite{BC2,BC3} manage to construct two types of weak solutions to the CH equation, namely the {\it conservative} solution and the {\it dissipative} solution. Their approach is based on a nonlinear change-of-unknowns which allows them to transform the equation to a semi-linear system. The second difficulty is the non-uniqueness of weak solutions, leading to analytic and numerical barriers to the well-posedness. To resolve this issue, it is necessary to devise proper criteria for singling out admissible weak solutions. Usually, in the theory of continuum physics, such admissibility criteria can be induced through the Second Law of thermodynamics, in the form of the ``entropy" inequalities. In the case of the CH equation, it is shown by Bressan-Chen-Zhang \cite{BCZ}  that an energy criterion can be adapted as a selection principle for admissible weak solutions. Relying on the analysis of characteristics, they are able to convert the original initial-value problem to a set of ODEs satisfied by the solution $ u $ and $ u_x $ along the characteristics with a new energy variable. From the uniqueness of the solution to this ODEs system, they obtain the uniqueness of conservative solutions to the original equation. Such an approach is later extended to the case of the variational wave equations \cite{BCZ2}.
Another set of works \cite{BF,GHR1,GHR2} on the Lipschitz continuous dependence of solutions constructed in \cite{BC2} under some new metric is also of great interest.


Motivated by the works of Bressan-Constantin \cite{BC2}  and Bressan-Chen-Zhang \cite{BCZ,BCZ2}, we aim to investigate the issue on the existence and uniqueness of global weak solution to the Novikov equation.
We rewrite the initial value problem of the Novikov equation in the following weak form
\beq\label{nv}
\textstyle u_t +u^2 u_x +\partial_xP_1 + P_2=0,
\eeq
\bel{ID}
u(0,x)=u_0(x),
\eeq
where we denote that
\beq\label{P12}
\textstyle P_1:=p\ast (\frac{3}{2}u u_x^2+u^3),\qquad P_2:=\frac{1}{2} p\ast u_x^3,
\eeq
with
 $p=\frac{1}{2}e^{-|x|}$. The weak solution we seek here is defined in the following.
\begin{definition}\label{def1}
The energy conservative solution $u=u(t,x)$ of \eqref{nv}-\eqref{ID} satisfies
\begin{itemize}
\item[1.] For any fixed $t\geq 0$, $u(t,\cdot)\in H^1{(\R)}\cap W^{1,4}{(\R)}$.
The map $t\rightarrow u(t,\cdot)$ is Lipschitz continuous under the $L^4(\R)$ metric.

\item[2.] The solution $u=u(t,x)$ satisfies the initial condition (\ref{ID}) in $L^4(\R),$ and
\beq\label{nv_weak}
\bega{l}
\iint_{\Lambda}
	\left\{-u_x\, \bigl(\phi_t+u^2\,\phi_x)+(-\frac{3}{2}u u_x^2-u^3+P_1 +\partial_x P_2)\phi\right\}\, dx\, dt\\[4mm]
	\qquad\qquad+\int_{\R}u_{0,x}\phi(0,x)\,dx=0
	\enda
\eeq
for every test function $\phi\in C_c^1(\Lambda)$ with $\Lambda=\Big\{(t,x)\,\Big|\  t\in [0, \infty), x\in \R\Big\}$.

\item[3.] The solution $u=u(t,x)$ is {\bf\em conservative} if the balance law (\ref{ux4})
is satisfied in the following sense.

There exists a family of Radon measures $\{\mu_{(t)},\, t\in \R^+\}$,  depending continuously on time and w.r.t the topology of weak convergence of measures.
For every $t\in \R^+$,  the absolutely continuous part of $\mu_{(t)}$ w.r.t. the Lebesgue measure has density $u_x^4(t,\cdot)$, which provides a
measure-valued solution to the balance law
\bel{weak_en}
\bega{l}
 \int_{\R^+} \left\{ \int(\phi_t+u^2\phi_x)d\mu_{(t)}+\int\Big(4u^3 u_x^3-4u_x^3 (P_1+\partial_x P_2)\Big)\phi\, dx \right\}dt\\[4mm]
\qquad\qquad-\int_\R u^4_{0,x}\phi(0,x)dx=0,
\enda
\eeq
for every test function $\phi\in C_c^1(\Lambda)$.
\end{itemize}
\end{definition}

Now we state our main result on the global well-posedness of the energy conservative solutions to \eqref{nv}-\eqref{ID}.
\begin{theorem}\label{main}
Let $u_0\in H^1(\R)\cap W^{1,4}(\R)$ be an absolute continuous function on $x$. Then the initial value problem \eqref{nv}-\eqref{ID} admits a unique energy conservative solution $u(t,x) $ defined for all $(t,x)\in \R^+\times\R$. The solution also satisfies the following properties.
\begin{itemize}
\item[1.] $u(t,x)$ is H\"older continuous with exponent $3/4$ on both $t$ and $x$.
\item[2.] The first energy density $u^2+u_x^2$ is conserved for any time $t\geq 0$, i.e.
\bel{energy1}
\mathcal{E}(t)=\| u(t)\|^2_{H^1}=\| u_0\|^2_{H^1}\quad\hbox{for any}\quad t\geq0;
\eeq

\item[3.] The second energy density $u^4 + 2u^2u^2_x - {1\over3}u^4_x$ is conserved in the following sense.
\begin{itemize}
\item[(i).] An energy inequality is satisfied in $(t,x)$ coordinates:
\bel{energy2}
\mathcal{F}(t) =\int_\R \LC u^4 + 2u^2u^2_x - {1\over3}u^4_x \RC(t,x)\ dx \geq \mathcal{F}(0)\quad\hbox{for any}\quad t\geq0.
\eeq
\item[(ii).]  Denote  a family of Radon measures $\left\{\nu_{(t)},\  t\in \R^+\right\}$, such that
\[\nu_{(t)}(\mathcal{A})=\int_\mathcal{A} \LC u^4 + 2u^2u^2_x  \RC(t,x)\ dx-{1\over3}\mu_{(t)}(\mathcal{A})\]
for any  Lebesgue  measurable set $\mathcal{A}$ in $\R$. Then
for any $t\in \R^+$,
\[
\nu_{(t)}(\R)=\mathcal{F}(0)=\int_\R \LC u^4 + 2u^2u^2_x - {1\over3}u^4_x \RC(0,x)\ dx.
\]

For any $t\in \R^+$,  the absolutely continuous part of $\nu_{(t)}$ w.r.t. Lebesgue measure has density $u^4 + 2u^2u^2_x - {1\over3}u^4_x$.
For almost every $t\in \R^+$, the singular part of $\nu_{(t)}$ is concentrated on the set where
$u=0$.
\end{itemize}
\item[4.]
A continuous dependence result holds. Consider a sequence of initial data ${u_0}_n$ such that
$\|{u_0}_n-u_0\|_{H^1\cap W^{1,4}}\rightarrow 0$, as $n\rightarrow\infty$. Then
the corresponding solutions $u_n(t,x)$ converge to $u(t,x)$ uniformly for $(t,x)$
in any bounded sets.
\end{itemize}
\end{theorem}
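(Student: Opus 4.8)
The plan is to follow the method of Bressan--Constantin \cite{BC2} and Bressan--Chen--Zhang \cite{BCZ}: reduce the quasilinear problem \eqref{nv}--\eqref{ID} to a semilinear system of ODEs in suitable Lagrangian--energy coordinates, solve that system globally, and then transfer the conclusions back to the Eulerian frame. Concretely, I would first introduce the characteristics $t\mapsto x(t,\bar\beta)$ governed by $\dot x = u^2(t,x)$ together with an energy-adapted relabeling $\beta\mapsto x$, chosen so that the cumulative first energy $\int(u^2+u_x^2)\,dx$ grows at unit rate in the new variable. Since $u_x$ may blow up in finite time, I would not track $u_x$ directly but a bounded substitute such as $v=2\arctan u_x$ (or the angular variable adapted to the $W^{1,4}$ setting), together with a density variable encoding the conserved energy. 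This produces a set of unknowns that stay finite even across wave breaking.

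Next I would differentiate $u$, the angular variable, and the nonlocal terms $P_1,P_2$ from \eqref{P12} along the characteristics. The key structural gain is that the convolution terms become bounded functionals of the Lagrangian unknowns: because $\|u(t,\cdot)\|_{L^\infty}$ is controlled by the (formally conserved) first energy, and the relabeling keeps the energy integral normalized, the resulting right-hand side is globally bounded and locally Lipschitz in the state variables. A standard Picard/Banach fixed-point argument then yields a unique global-in-time solution of the Lagrangian system, with no finite-time singularity at the ODE level.

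Then I would define $u(t,x)$ by inverting the coordinate change and verify the stated properties one by one. The weak identity \eqref{nv_weak} holds by construction along characteristics. For the H\"older continuity with exponent $3/4$, spatial regularity follows from $|u(t,x)-u(t,y)|\le \|u_x\|_{L^4}\,|x-y|^{3/4}$ using the uniform bound on $\|u_x(t,\cdot)\|_{L^4}$, and temporal regularity from $\dot x=u^2$ bounded together with the $L^4$-Lipschitz dependence in time. Conservation of the first energy \eqref{energy1} is immediate from the invariance built into the relabeling. For the second energy I would construct the Radon measures $\mu_{(t)}$ (with absolutely continuous density $u_x^4$) and $\nu_{(t)}$ from the Lagrangian data, deduce \eqref{energy2} and the total-mass identity, and show that the singular part of $\nu_{(t)}$ concentrates on $\{u=0\}$; this last point uses that energy concentration, i.e.\ $u_x\to\infty$, is compatible with the balance law \eqref{weak_en} only where $u$ vanishes.

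For uniqueness I would argue in the reverse direction: given any conservative weak solution in the sense of Definition \ref{def1}, I would show that its characteristics are well-defined and Lipschitz despite the low regularity, that the energy measure transports correctly, and that in the same Lagrangian--energy coordinates $u$ solves exactly the semilinear system above, so that uniqueness for that system forces uniqueness of the weak solution. The continuous dependence in item~4 then follows from Lipschitz dependence of the Lagrangian solution on its data, pulled back to the Eulerian frame. \textbf{The main obstacle} is precisely this uniqueness step at the times where energy concentrates: one must prove that the conservative balance law of item~3 of Definition \ref{def1} exactly dictates how the singular energy is reinstated, ruling out the dissipative alternative, and that characteristics remain well-defined through such instants. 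Establishing that the singular energy lives on $\{u=0\}$, and controlling the nonlocal terms $P_1,P_2$ uniformly across breaking, are the technically delicate pieces.
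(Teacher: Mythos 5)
Your overall architecture (characteristics plus an energy-weighted relabeling, a semilinear Lagrangian system solved globally, inverse transform, then uniqueness by showing any conservative solution satisfies the same system) is indeed the paper's architecture. But there is a genuine gap in the single most important choice: you normalize the relabeling by the \emph{first} energy density $u^2+u_x^2$, i.e. the Camassa--Holm choice. For the Novikov equation this fails at the first technical step. The right-hand side of the semilinear system necessarily contains the nonlocal terms $P_2$ and $\partial_x P_2$, which by \eqref{P12} are convolutions with $u_x^3$, and $\|u_x^3\|_{L^1}=\|u_x\|_{L^3}^3$ is \emph{not} controlled by the $H^1$ energy; likewise the balance law \eqref{ux2} for $u_x^2$ is not closed, since it carries the $P_2$ source. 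So your claim that ``the resulting right-hand side is globally bounded and locally Lipschitz in the state variables'' because $\|u\|_{L^\infty}$ is controlled by the first energy is unjustified, and both the local Lipschitz property and the global a priori bounds collapse. The paper's resolution is to relabel by the quartic density $(1+u_x^2)^2$ (see \eqref{Y_def}), work in $H^1\cap W^{1,4}$, and --- since even the fourth-order balance law \eqref{ux4} does not close by itself --- to use the \emph{second} conservation law $\mathcal{F}(t)=\int_\R\bigl(u^4+2u^2u_x^2-{1\over3}u_x^4\bigr)dx$ in combination with $\mathcal{E}$ to obtain $\|u_x\|_{L^4}^4\le 3\bigl(2\mathcal{E}(0)^2-\mathcal{F}(0)\bigr)$ as in \eqref{L^4 control}--\eqref{L^3 control}; this is precisely what bounds $P_2,\partial_xP_2$ (cf. \eqref{bound on P}) and makes \eqref{semi} globally well posed. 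Your proposal invokes $\mathcal{F}$ only as a conclusion (item 3 of the theorem), never as the a priori estimate on which global solvability rests.

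The same mismatch infects your uniqueness step. The balance law \eqref{weak_en} in Definition \ref{def1} governs the measure $\mu_{(t)}$ whose absolutely continuous part has density $u_x^4$, and accordingly the paper's energy variable is $\beta = x(t,\beta)+\mu_{(t)}\{(-\infty,x)\}+\theta\,\mu_{(t)}\{x\}$ as in \eqref{xgen}--\eqref{beta_def}, i.e. normalized by $x+\int u_x^4$, not by the cumulative $H^1$ energy. With your $\beta$, the characteristic-selection equation \eqref{ytx2} cannot be derived from \eqref{weak_en}, and the quantitative estimates that make the argument run (the contraction bound $|u(t,\beta_2)-u(t,\beta_1)|\le\beta_2-\beta_1$, and the error estimate $o_1(t-\tau)=O\bigl((t-\tau)^{5/4}\bigr)$ obtained via $\|u_x\|_{L^4}$) are not available. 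A smaller but real inaccuracy: the exact all-time conservation \eqref{energy1} of $\mathcal{E}$ is not ``immediate from the invariance built into the relabeling''; in the paper it follows from the $W^{1,4}$ regularity (the set $\{\cos(v/2)=0\}$ has Lebesgue measure zero, so no $H^1$ energy can hide there), while the higher energy only satisfies the inequality \eqref{energy2} in $(t,x)$ coordinates, with equality holding at the level of the measures $\nu_{(t)}$.
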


Note that one of the differences between our result and the one for the CH equation lies in the concentration phenomenon of the energy measure, which is closely related to the persistence of the singularity. In fact from the blow-up criterion one can use an auxiliary variable $v = 2\arctan u_x$ to keep track of the wave breaking: wave breaking occurs exactly at $\cos v = -1$. It turns out that at the point of singularity the dynamics of $v$ follows $v_t \propto c'(u)$, where $c(u)$ is the wave speed. Therefore if $c'(u) \neq 0$ then $v$ would leave $-\pi$ immediately. This transversality condition implies that the singularity will disappear instantaneously. On the other hand, if at the place where $\cos v = -1$ one also has $c'(u) = 0$, then it is possible that this singularity will persist, leading to the concentration of energy. For the CH equation, the wave speed $c(u) = u$, which is non-degenerate. Therefore for almost all $t\ge 0$ the energy measure is absolutely continuous, which means that the wave breaking only happens instantaneously. However for the Novikov equation the wave speed $c(u)  = u^2$ exhibits degeneracy at places where $u = 0$, and the dynamics of $u$ involves the interplay between the local and nonlocal terms, cf. \eqref{semi}. It is thus possible that the solution remains zero for a period of time, and hence the higher order energy measure $\nu(t)$ (or $\mu(t)$) might have a singular part concentrated on the set where the solution vanishes. This corresponds to having a sustainable singularity.

Following the approach in \cite{BC2} and \cite {CS}, one of the key ideas is to construct a ``good" set of new variables based on the characteristics and to transform the original equation to a closed semi-linear system on these new variables. The choice for the new variables strongly hinges upon the structure of the {\it energy law} of the system. It turns out that one of the new variables, namely the energy dependent characteristic variable, can be constructed in a way such that it dilates the possible wave-breaking due to the concentration of characteristics, cf. \eqref{Y_def}. Moreover, the function spaces for the weak solutions are determined by the control of $u_x$. In the CH case the balance law for $u^2_x$ leads to a closed estimate for $\|u_x\|^2_{L^2}$, and hence it suffices to work in the $H^1$ regularity frame with the energy density being $1+u^2_x$. However for the Novikov equation, the balance law \eqref{ux2} for $u^2_x$ involves a higher order convolution $P_2$, defined in \eqref{P12}, which fails to be bounded by the $H^1$ energy. This suggests one to seek higher order balance laws (cf. \eqref{ux4}) and work in higher regularity spaces. It turns out that although such a balance law \eqref{ux4} including a term $-4u_x^3 \,\partial_x P_2$ still does not close the estimate for $\|u_x\|^4_{L^4}$ due to the strong {\it nonlocal} effects (this is in strong contrast to the case of variational waves as considered in \cite{CS}), it combines with some lower order balance laws to generate a conservation law $\mathcal{F}$, which is enough to provide controls on $\|u_x\|^4_{L^4}$. This motivates us to work in the $H^1\cap W^{1,4}$ space with the energy density chosen to be $(1+u^2_x)^2$.

With the set of new variables and some auxiliary unknown functions defined (cf. \eqref{def_lgq}) we are able to derive a closed semi-linear system on these new variables, cf. \eqref{semi}. The standard ODE theory asserts that the local well-posedness of the system is guaranteed provided that the right-hand side is Lipschitz. To extend the solution globally, some key  {\it a priori} estimates are achieved with the help of both the conservation laws on $\mathcal{E}$ and $\mathcal{F}$ defined in \eqref{energy1} and \eqref{energy2}, whereas in the case of the CH equation, only $\mathcal{E}$ is needed. The necessity of including the conservation law of $\mathcal{F}$, and hence enhancing the regularity requirement for the Novikov equation is mainly due to the higher nonlinearity in the equation. This enhanced regularity setup also changes the feature of the conservation of the lower order energy $\mathcal{E}(t) = \|u(t)\|^2_{H^1}$. In the CH equation, $\mathcal{E}$ is conserved for almost all time, whereas the better regularity of the Novikov solution leads to the exact conservation of $\mathcal{E}(t)$ for all time.

As for the uniqueness, we define a new energy variable as in \eqref{beta_def} which is in the spirit of \cite{BCZ}.
Such an energy variable helps to select the ``correct" characteristic after the collapse of characteristics at the time of wave breaking. This selection criterion is motivated by the idea of generalized characteristics used in \cite{Dafermos}. Furthermore, with some other auxiliary variables introduced associated to a given conservative solution, 
we can prove that these variables satisfy a particular semi-linear system which admits well-posedness, yielding the uniqueness of the conservative solution in the original variables. Again as we mentioned earlier, the wave speed of the Novikov equation has certain degeneracy, and hence the concentration of the energy is more delicate. We want to point that if energy concentration happens in an open set of time along any characteristic, one must have simultaneously that $u= 0$ and the auxiliary variable $v=2\arctan{u_x} = -\pi$.

The outline of the paper is as follows.  In Section 2 is a short review on conservation laws and basic  estimates for the Lipschitz continuous solution of semilinear system. A semi-linea system with a new variables is constructed in Section 3. Section 4 is devoted to global existence for the semi-linear system and the solutions are transferable to  those of the original equation \eqref{nv}. The crucial estimate and technical tool to determine a unique characteristic curve passing through every initial point is discussed in Section 5. Finally, according to various status of  the slope of conservative solution $ u_x,$ along a characteristic, a proof of the uniqueness of solution is  given in the last section, Section 6.

\section{Conservation laws and some estimates}
For smooth solutions, we differentiate \eqref{nv} with respect to $x$ to get
\beq\label{nv2}
\textstyle u_{xt} +u^2 u_{xx}+\frac{1}{2}u u_x^2
-u^3+P_1+ \partial_xP_2=0.
\eeq
Multiplying $u$ to \eqref{nv} we obtain
\beq\label{u2}
\LC {u^2\over2} \RC_t + \LC {1\over4} u^4 + u P_1 \RC_x + {1\over2}u P_2 = u_x P_1.
\eeq
Multiplying $u_x$ to \eqref{nv2}, we have
\[
\left(\frac{u_x^2}{2}\right)_t+u^2\left(\frac{u_x^2}{2}\right)_x +\frac{1}{2}u u_x^3-\left(\frac{u^4}{4}\right)_x+u_x P_1+u_x\partial_xP_2=0,
\]
which, upon using the identity $p_{xx} \ast f = p \ast f - f$, yields
\beq\label{ux2}
\left(\frac{u_x^2}{2}\right)_t + \LC {u^2u^2_x \over2} -{u^4\over4} + {1\over2}u \partial_x P_2 \RC_x - {1\over2}u P_2 = -u_x P_1.
\eeq
Therefore from \eqref{u2} and \eqref{ux2} we have the following local conservation law
\beq\label{cons1}
\LC {u^2 + u^2_x\over2} \RC_t + \LC {u^2u^2_x \over2} + uP_1 + u\partial_x P_2 \RC_x = 0.
\eeq

To derive another local conservation law, we multiply $4u^3$ to \eqref{nv} and $4u_x^3$ to \eqref{nv2} to
\begin{align}
&(u^4)_t + \LC {2\over3}u^6 + 4u^3P_1 \RC_x = -4u^3P_2 + 12 u^2u_x P_1, \label{u4}\\
&(u_x^4)_t+(u^2u_x^4)_x=4u^3 u_x^3-4u_x^3 (P_1+\partial_xP_2). \label{ux4}
\end{align}
From the above two, a direct computation shows that the following local conservation law holds.
\begin{equation}\label{cons2}
\begin{split}
&\LC u^4 + 2u^2u^2_x - {1\over3}u^4_x \RC_t + \LB 2u^4u^2_x - {1\over3} u^2u^4_x + {4\over3}u^3(P_1 + \partial_x P_2) \RB_x \\
& \quad +{4\over3} \LB (P_1 + \partial_x P_x)^2 - (P_2 + \partial_x P_1)^2 \RB_x = 0.
\end{split}
\end{equation}

Thus from \eqref{cons1} and \eqref{cons2} we have the following two conserved quantities
\begin{align}
&\mathcal{E}(t) : = \int_\R \LC u^2 + u^2_x \RC(t,x)\ dx = \mathcal{E}(0), \label{0cons E}\\
&\mathcal{F}(t) : = \int_\R \LC u^4 + 2u^2u^2_x - {1\over3}u^4_x \RC(t,x)\ dx = \mathcal{F}(0). \label{0cons F}
\end{align}
From the above two conservation laws and the Sobolev inequality
\[
\|u\|^2_{L^\infty} \leq \|u\|^2_{H^1} = \mathcal{E}(0)
\]
we deduce that
\beq\label{L^4 control}
\begin{split}
\|u_x\|^4_{L^4} & = 3\int_\R \LC u^4 + 2u^2u^2_x \RC\ dx - 3 \mathcal{F}(t) \\
& \leq 3\LC \|u\|^2_{L^\infty}\int_\R (u^2 + 2u^2_x)\ dx  - \mathcal{F}(t) \RC\\
& \leq 3\LC 2\mathcal{E}(t)^2 - \mathcal{F}(t) \RC = 3\LC 2\mathcal{E}(0)^2 - \mathcal{F}(0) \RC,
\end{split}
\eeq
which in turn implies that
\beq\label{L^3 control}
\|u_x\|^3_{L^3} \leq \sqrt{3\mathcal{E}(0) \LB 2\mathcal{E}(0)^2 - \mathcal{F}(0) \RB} =: K.
\eeq

From \eqref{L^3 control} we are able to bound $P_i(t)$ together with the derivatives $\partial_xP_i$ for $i = 1, 2$ as follows.
\beq\label{bound on P}
\begin{split}
\|P_1(t)\|_{L^\infty}, \|\partial_xP_1(t)\|_{L^\infty}& \leq \|p\|_{L^\infty}\left\| {3\over2}uu^2_x + u^3 \right\|_{L^1} \leq {3\over 4} \mathcal{E}(0)^{3/2},\\
\|P_1(t)\|_{L^2}, \|\partial_xP_1(t)\|_{L^2}& \leq \|p\|_{L^2}\left\| {3\over2}uu^2_x + u^3 \right\|_{L^1} \leq {3\over 2\sqrt{2}} \mathcal{E}(0)^{3/2},\\
\|P_2(t)\|_{L^\infty}, \|\partial_xP_2(t)\|_{L^\infty}& \leq {1\over2}\|p\|_{L^\infty}\|u^3_x\|_{L^1}\leq {1\over4}K,\\
\|P_2(t)\|_{L^2}, \|\partial_xP_2(t)\|_{L^2}& \leq {1\over2}\|p\|_{L^2}\|u^3_x\|_{L^1}\leq {1\over2\sqrt{2}}K.
\end{split}
\eeq

\section{Semi-linear system for smooth solutions}
Next we derive a semi-linear system for smooth solutions.

The equation of the characteristic is
\beq\label{0char}
\frac{d x(t)}{d t}=u^2\bigl(t,x(t)\bigr) .
\eeq
We denote the characteristic passing through the point $(t,x)$ as
\[
a\mapsto\, x^c(a;\, t,\, x)\,.
\]

As is explained in the Introduction, we use the energy density $(1+ u^2_x)^2$ and introduce new coordinates $(T,Y)$, such that
\beq\label{Y_def}
Y\equiv Y(t,x):=\int_0^{x^c(0;\, t,\, x)} (1+ u_x^2(0,x'))^2\,d x'.
\eeq
Here $Y=Y(t,x)$ is a characteristic coordinate, which satisfies
\beq\label{Y0}
Y_t+u^2 Y_x=0
\eeq
for any $(t,x)\in \mathbb{R}^+\times\mathbb{R}$. Denote
\[
T=t.
\]
Then any smooth function
$f(t,x)=f(T,x(T,Y))$ can be considered as a function of $(T,Y)$ also denoted by $f(T,Y)$. It is easy to check that
\[
f_t+u^2 f_x=f_Y(Y_t+u^2 Y_x)+f_T(T_t+u^2 T_x)=f_T.
\]
where we use \eqref{Y0}, $T_t=1$ and $T_x=0$.
On the other hand, we have
\[
f_x=f_Y Y_x +f_T T_x=f_Y Y_x.
\]
In a summary, we have
\beq\label{fyt}
f_T=f_t+u^2 f_x,\quad\hbox{and}\quad f_Y=\frac{f_x}{ Y_x}.
\eeq

We define
\beq\label{def_lgq}
v:=2\arctan{u_x} \com{and} \xi:=\frac{(1+u_x^2)^2}{Y_x}.
\eeq
Note that $Y_x$ measures the accumulation of characteristics, and thus $\xi$ can be thought of as the balance between the characteristic concentration and the energy density dilation.

Simple computation leads to
\beq\label{tri}
\frac{1}{1+u_x^2}=\cos^2 \frac{v}{2},\quad
\frac{u^2_x}{1+u_x^2}=\sin^2 \frac{v}{2} \com{and}
 \frac{u_x}{1+u_x^2}=\frac{1}{2}\sin v.
\eeq

In the next step, we derive a closed semi-linear system on unknowns $u$, $v$,
and $\xi$ under the independent variables $(T,Y)$. This method is first used by Bressan and Constantin in \cite{BC2} in establishing $H^1$ solution $u(t,\cdot)$
for Camassa-Holm equation modeling wave wave. In this paper, we use similar unknowns as those used in \cite{CS} which work for more general class of solutions.
First,
\beq\label{ueq2}
u_T=u_t +u^2 u_x=-\partial_xP_1-P_2
\eeq
with
\bes
\textstyle P_2(Y)&=&\frac{1}{4}\int_{-\infty}^{\infty} e^{-|x-\bar{x}|} u_x^3(t,\bar x)\,d\bar x\nn\\
&=&\frac{1}{8}\int_{-\infty}^{\infty}e^{-|\int^{Y}_{\bar Y}(\xi\,\cos^4{\frac{v}{2}})\,(T,\tilde Y) \,d\tilde Y|} (\xi\,\sin v\,\sin^2{\frac{v}{2}})\,(T,\bar Y)\,d\bar Y\nn
\ees
and similarly
\[
\textstyle \partial_xP_1(Y)=\frac{1}{2}\left(
\int_Y^\infty - \int_{-\infty}^Y\right)
e^{ -|
\int^Y_{\bar Y} (\xi\,\cos^4{\frac{v}{2}})\,(T,\tilde Y) \,d\tilde Y|}
 (\frac{3}{8}u\, \sin^2{v}+u^3\, \cos^4{\frac{v}{2}}) \,\xi\,
 (T,\bar Y)\,d\bar Y
\]
for any time $t=T$.

Next we calculate the equations for $v$ and $\xi$. First,
\bes\label{veq}
v_T&=&\frac{2}{1+u_x^2}\left(u_{xt}+u^2\, u_{xx} \right)\nn\\
		&=& \frac{2}{1+u_x^2}(-\frac{1}{2}u u_x^2
+u^3-P_1 - \partial_x P_2)\nn\\
		&=&-u \sin^2{\frac{v}{2}}+2u^3\,\cos^2{\frac{v}{2}}
		-2\cos^2{\frac{v}{2}}\, (P_1 + \partial_x P_2)\,.
\ees

To derive the equation for $\xi$, we need to use the following relation:
\beq\label{Y02}
Y_{tx}+u^2 Y_{xx}=-2u u_x Y_x.
\eeq which can be
derived from \eqref{Y0}. We have
\bes\label{xieq}
\xi_T&=&\left(\frac{(1+u_x^2)^2}{Y_x}\right)_T\\
&=&\frac{1}{Y_x}2(1+u_x^2)\left((u_x^2)_t+u^2(u_x^2)_x \right)-\frac{(1+u_x^2)^2}{(Y_x)^2}(Y_{xt}+u^2 Y_{xx})\nn\\
&=&\frac{1}{Y_x}\left((u_x^4)_t+u^2(u_x^4)_x + 2(u_x^2)_t+2u^2(u_x^2)_x\right)
+\frac{1+2u_x^2 +u_x^4}{Y_x} 2u u_x \nn\\
&=&\frac{1}{Y_x}\left((u_x^4)_t+(u^2u_x^4)_x+2(u_x^2)_t+2u^2(u_x^2)_x \right)+\frac{2u u_x +4u u_x^3}{Y_x}\nn\\
&=& \xi\,\frac{(4u^3+2u) u_x^3-(4u_x^3+4u_x) (P_1 + \partial_x P_2) +(4 u^3+2u)\,u_x }{(1+u_x^2)^2}\nn\\
&=&\xi\left[(2u^3+u)\, \sin^2\frac{v}{2} -2\, (P_1 + \partial_x P_2) +(2 u^3+u)\cos^2 \frac{v}{2} \right]\,\sin v\,,\nn\\
&=&\xi\left[ (2u^3 + u) - 2(P_1 + \partial_x P_2) \right] \sin v. \ \nn
\ees
%
\section{Global existence}
In the first subsection, we prove the global existence of solutions for
the semi-linear system derived in the previous section. Then we transform the solution for the semi-linear system to the original equation \eqref{nv}.

Note, in the first subsection, we start from the semi-linear system and do not use any more information in the previous section when we derive it, including the definitions of $v$ and $\xi$.

\subsection{Existence of semi-linear system}
The semi-linear system is
\begin{equation}\label{semi}\left\{\begin{array}{rcl}
u_T&=&-\partial_xP_1-P_2\\[2mm]
\displaystyle v_T&=&-u \sin^2{\frac{v}{2}}+2u^3\,\cos^2{{v\over2}}
		-2\cos^2{\frac{v}{2}}\, (P_1+\partial_xP_2)\\[2mm]
\displaystyle \xi_T&=&\xi\left[(2u^3+u) -2\, (P_1+\partial_xP_2)  \right]\,\sin v,
\end{array}\right.
\end{equation}
with initial conditions given as
\beq\label{initial}\left\{\begin{array}{rcl}
u(0,Y)&=&u_0(x(0,Y)),\\[2mm]
v(0,Y)&=&2\arctan(u'_0(x(0,Y))),\\[2mm]
\xi(0,Y)&=&1.
\end{array}\right.
\eeq
where
\begin{equation}\label{def P}
\begin{split}
P_1(Y)= & \frac{1}{2}\int_{-\infty}^{\infty}
e^{ -\left|
\int^Y_{\bar Y} (\xi\,\cos^4{\frac{v}{2}})\,(T,\tilde Y) \,d\tilde Y\right|}
 \left(\frac{3}{8}u\, \sin^2{v}+u^3\, \cos^4{\frac{v}{2}}\right) \,\xi\,
 (T,\bar Y)\,d\bar Y,\\
\partial_xP_1(Y)= & \frac{1}{2}\left(
\int_Y^\infty - \int_{-\infty}^Y\right)
e^{ -\left|
\int^Y_{\bar Y} (\xi\,\cos^4{\frac{v}{2}})\,(T,\tilde Y) \,d\tilde Y\right|}
 \left(\frac{3}{8}u\, \sin^2{v}+u^3\, \cos^4{\frac{v}{2}}\right) \,\xi\,
 (T,\bar Y)\,d\bar Y,\\
P_2(Y)= & \frac{1}{8}\int_{-\infty}^{\infty}e^{-\left|\int^{Y}_{\bar Y}(\xi\,\cos^4{\frac{v}{2}})\,(T,\tilde Y) \,d\tilde Y\right|} \left(\xi\,\sin v\,\sin^2{\frac{v}{2}}\right)\,(T,\bar Y)\,d\bar Y,\\
%
%
%
%
\partial_xP_2(Y)= &
\frac{1}{8}\left(
\int_Y^\infty - \int_{-\infty}^Y\right)e^{-\left|\int^{Y}_{\bar Y}(\xi\,\cos^4{\frac{v}{2}})\,(T,\tilde Y) \,d\tilde Y\right|} \left(\xi\,\sin v\,\sin^2{\frac{v}{2}}\right)\,(T,\bar Y)\,d\bar Y.
\end{split}
\eeq

It is easy to see that semi-linear system \eqref{semi} is invariant under translation by $2\pi$ in $v$.
It would be more precise to use $e^{iv}$ as variable. For simplicity, we use $v\in[-\pi,\pi]$ with endpoints identified.

Now we consider \eqref{semi} as a system of ordinary differential equations on $(u,v,\xi)$ in the Banach space
\beq\label{space}
X := \LB H^1(\R)\cap W^{1,4}(\R) \RB \times \LB L^2(\R)\cap L^\infty(\R) \RB \times L^\infty(\R)
\eeq
with the norm
\[
\|(u, v, \xi) \|_X := \|u\|_{H^1} + \|u\|_{W^{1,4}} + \|v\|_{L^2} + \|v\|_{L^\infty} + \|\xi\|_{L^\infty}.
\]

From the standard ODE theory it follows that to obtain the local wellposedness of solutions to the system \eqref{semi}-\eqref{initial}, it suffices to prove that all functions on the right-hand side of \eqref{semi} are locally Lipschitz continuous. Then the energy conservation would ensure the global wellposedness.

\begin{lemma}[Local wellposedness]\label{lem local exist}
Let $u_0\in H^1(\R)\cap W^{1,4}(\R)$. Then there exists a $T_0>0$ such that the initial value problem \eqref{semi}-\eqref{initial} has a unique solution defined on $[0,T_0]$.
\end{lemma}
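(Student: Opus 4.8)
The plan is to read \eqref{semi} as an abstract ordinary differential equation
\[
\frac{d}{dT}(u,v,\xi)=F(u,v,\xi),\qquad (u,v,\xi)\in X,
\]
on the Banach space $X$ of \eqref{space}, and to invoke the Picard--Lindel\"of (Cauchy--Lipschitz) theorem for ODEs valued in a Banach space. As already observed, the only thing needing proof is that the right-hand side $F=(F_1,F_2,F_3)$ maps $X$ into $X$ and is Lipschitz continuous on every bounded ball of $X$; granting this, rewriting the system in integral form and running a contraction-mapping (Picard iteration) argument produces a unique solution on an interval $[0,T_0]$, with $T_0$ depending only on the radius of a ball around the datum \eqref{initial} and on the Lipschitz constant of $F$ there. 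Throughout I would restrict to a ball $B_R$ centered at the initial datum $(u_0(x(0,\cdot)),\,2\arctan u_0',\,1)$ small enough that $\xi\ge 1/2$ on it, which keeps the weight in the exponent of the kernel bounded away from zero.

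Several elementary ingredients make every estimate run. Since $H^1(\R)\hookrightarrow L^\infty(\R)$, on $B_R$ one has $\|u\|_{L^\infty}\le CR$, so the factors $u,u^3$ in \eqref{semi} are bounded and locally Lipschitz in $u$. The trigonometric factors $\sin v,\ \sin^2(v/2),\ \cos^2(v/2),\ \cos^4(v/2)$ are bounded by $1$ and globally Lipschitz in $v$, while $\sin v$ and $\sin^2(v/2)$ vanish at $v=0$; together with $v\in L^2\cap L^\infty$ this supplies the $L^1$ integrability of the weights used below. Finally, the kernel $\exp(-|\int_{\bar Y}^Y \xi\cos^4(v/2)\,d\tilde Y|)$ is bounded by $1$, its exponent being nonpositive.

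To see that $F$ maps $B_R$ into $X$, I would first bound the four nonlocal terms in \eqref{def P}. Using the kernel bound and the $L^1$ control of the weights, e.g. $\|\xi\sin v\sin^2(v/2)\|_{L^1}\lesssim\|\xi\|_{L^\infty}\|v\|_{L^\infty}\|v\|^2_{L^2}$ and the analogous bound for the $P_1$ integrand, one obtains $L^\infty$, and by interpolation $L^2$ and $L^4$, estimates for $P_1,\partial_xP_1,P_2,\partial_xP_2$ (mirroring \eqref{bound on P}). For the membership $F_1=-\partial_xP_1-P_2\in H^1\cap W^{1,4}$ I would differentiate the explicit formulas \eqref{def P} in $Y$: using the identity $1/Y_x=\xi\cos^4(v/2)$ implicit in \eqref{tri}, each $Y$-derivative reproduces a bounded convolution integral of the same type multiplied by the bounded factor $\xi\cos^4(v/2)$, plus the pointwise term from the jump at $\bar Y=Y$, so $\partial_YF_1$ is controlled in $L^2$ and $L^4$ by the same mechanism. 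The components $F_2,F_3$ are sums of products of bounded functions with $L^2$/$L^\infty$ functions, hence lie in $L^2\cap L^\infty$ and $L^\infty$ respectively.

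The crux, and the step I expect to cost the most effort, is the \emph{local Lipschitz dependence of the kernel} on $(\xi,v)$: its exponent integrates $\xi\cos^4(v/2)$ over the unbounded interval $[\bar Y,Y]$ and so is not controlled by the $X$-norm of the difference alone. The tool is the elementary inequality $|e^{-|a|}-e^{-|b|}|\le|a-b|\,e^{-\min(|a|,|b|)}$. With $a=\int_{\bar Y}^Y\xi_1\cos^4(v_1/2)\,d\tilde Y$ and $b$ its analogue for the second state, a kernel difference acquires the factor $|\int_{\bar Y}^Y(\xi_1\cos^4(v_1/2)-\xi_2\cos^4(v_2/2))\,d\tilde Y|$ multiplied by the surviving exponential. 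Because $\cos^4(v/2)$ enters both this factor and the exponent, and $\xi\ge1/2$ on $B_R$, the $\xi$-contribution is tamed by the bound $\sup_{L\ge0}L\,e^{-L/2}<\infty$, yielding a pointwise estimate $|K_1-K_2|\lesssim\|\xi_1-\xi_2\|_{L^\infty}$; the most delicate piece is the dependence on $v$ near $v=\pm\pi$, where $\cos^4(v/2)$ degenerates and the exponential ceases to decay, which I would control using the vanishing of the weight together with $v_1-v_2\in L^2$. Inserting these kernel bounds and the elementary estimates above into \eqref{def P} shows each of $P_1,\partial_xP_1,P_2,\partial_xP_2$, and hence each $F_j$, is Lipschitz from $B_R$ into its target space. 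The Picard--Lindel\"of theorem then produces a unique solution on some $[0,T_0]$, as claimed.
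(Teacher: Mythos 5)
There is a genuine gap, and it sits at the exact point the paper's proof identifies as the crucial idea. Your argument that $F$ maps $B_R$ into $X$ bounds the exponential kernel by $1$ and then claims $L^\infty$ bounds on $P_i,\partial_x P_i$ yield $L^2$ and $L^4$ bounds ``by interpolation.'' This is false: interpolation needs an endpoint below, and you have none. With the kernel merely bounded by $1$, the integrals in \eqref{def P} are convolution-type integrals of an $L^1$ weight against a non-decaying kernel; generically such an integral tends to a nonzero constant as $|Y|\to\infty$ (take the kernel identically $1$: then $P_2$ is a constant), hence lies in $L^\infty$ but in no $L^p(\R)$ with $p<\infty$. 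So membership of $P_i,\partial_xP_i$ in $L^2\cap L^4$ --- and a fortiori in $H^1\cap W^{1,4}$ --- is never established by your argument. What is missing is precisely the paper's decay estimate: since $\|v\|_{L^2}\le B$ on the bounded set, the measure of $\{Y:\ |v(Y)/2|\ge \pi/4\}$ is at most $B^2/2$, whence for $\bar Y<Y$ one has $\int_{\bar Y}^{Y}\xi\cos^4\frac{v}{2}\,d\tilde Y\ \ge\ \frac{\xi^-}{4}\bigl[(Y-\bar Y)-\frac{B^2}{2}\bigr]$ (the paper's \eqref{exp kernel}). This dominates the kernel by the integrable function $\Gamma$ of \eqref{exp decay func}, and then Young's inequality $\|\Gamma\ast f\|_{L^p}\le\|\Gamma\|_{L^1}\|f\|_{L^p}$ delivers the $L^2$ and $L^4$ bounds. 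Note that your own setup has all the ingredients ($v\in L^2$, $\xi\ge 1/2$), but you never assemble them into this lower bound on the exponent.

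The same omission undermines your Lipschitz step. Your pointwise estimate $|K_1-K_2|\lesssim\|\xi_1-\xi_2\|_{L^\infty}$, obtained from $\sup_{L\ge0}Le^{-L/2}<\infty$, carries no decay in $|Y-\bar Y|$, so after integration against the $L^1$ weight it gives only an $L^\infty$ bound on $P_i[\,\cdot_1\,]-P_i[\,\cdot_2\,]$, not the $L^2$/$L^4$ (let alone $H^1\cap W^{1,4}$) Lipschitz estimates that the ODE framework in $X$ requires. The fix is to sacrifice only part of the exponential, e.g.\ $Le^{-L/2}\le Ce^{-L/4}$, and then invoke the same measure-theoretic lower bound to convert the surviving factor $e^{-\frac{1}{4}\int\cos^4\frac{v}{2}}$ into $\Gamma(Y-\bar Y)$; this also resolves the $v$-dependence ``near $v=\pm\pi$'' which you flag as delicate but leave unproved --- the degeneration of $\cos^4\frac{v}{2}$ is harmless precisely because the set where it occurs has measure controlled by $\|v\|^2_{L^2}$. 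Finally, a structural remark: where you propose direct difference estimates for the Lipschitz property, the paper instead verifies that the partial derivatives $\partial_uP_i,\partial_vP_i,\partial_\xi P_i$ (and those of $\partial_xP_i$) are uniformly bounded linear operators on the bounded set $\Omega$; either route works once the kernel decay lemma is in hand, but neither works without it.
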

\begin{proof}
The idea of the proof is similar to that from \cite{BC2}. For completeness we provide the details. As explained in the previous paragraph, our goal is to show that the right-hand side of \eqref{semi} is Lipschitz continuous in $(u, v, \xi)$ on every bounded domain $\Omega$ in $X$ of the form
\begin{align*}
\Omega = & \left\{ (u,v,\xi): \  \|u\|_{H^1}+\|u\|_{W^{1,4}}\leq A,\ \|v\|_{L^2}\leq B,\ \|v\|_{L^\infty} \leq {3\pi\over2}, \right.\\
& \qquad \xi^-\leq \xi(x) \leq \xi^+ \text{ a.e. } x \Big\},
\end{align*}
for any positive constants $\alpha, \beta, \xi^-, \xi^+$.

From the Sobolev inequality
\[
\|u\|_{L^\infty} \leq \|u\|_{H^1}, \quad \text{and } \quad \|u\|_{L^\infty} \leq C\|u\|_{W^{1,4}},
\]
and the uniform bounds on $v, \xi$ it follows that the maps
\[
-u\sin^2{v\over2}, \quad 2u^3\cos^2{v\over2}, \quad (2u^3 + u) \xi\sin{v}
\]
are all Lipschitz continuous from $\Omega$ into $L^2(\R)\cap L^4(\R)\cap L^\infty(\R)$. Thus what remains to prove is that the maps
\beq\label{P map}
(u, v, \xi) \mapsto (P_i, \partial_x P_i), \qquad i = 1, 2
\eeq
are Lipschitz from $\Omega$ into $L^2(\R)\cap L^4(\R)\cap L^\infty(\R)$. In fact we can show that the above maps are Lipschitz from $\Omega$ into $H^1(\R)\cap W^{1,4}(\R)$.

A crucial idea is to utilize the exponential decay in $P_i$ and $\partial_x P_i$ as $|Y-\bar{Y}|\to\infty$. For that purpose, we first notice that for $\displaystyle (u, v, \xi)\in \Omega$ it holds
\begin{align*}
\text{measure}&\left\{ Y\in\R:\ \left| {v(Y)\over2} \right| \geq {\pi\over4} \right\} \leq \text{measure}\left\{ Y\in\R:\ \sin^2{v(Y)\over2} \geq {1\over2} \right\}\\
&\leq 2\int_{\left\{ Y\in\R:\ \sin^2{v(Y)\over2} \geq {1\over2} \right\}} \sin^2{v(Y)\over2}\ dY \\
&\leq {1\over2}\int_{\left\{ Y\in\R:\ \sin^2{v(Y)\over2} \geq {1\over2} \right\}} v^2(Y)\ dY \leq {B^2\over2}.
\end{align*}
Thus for any $\bar{Y}< Y$ we have
\beq\label{exp kernel}
\begin{split}
\int^Y_{\bar{Y}} \xi(\tilde{Y})\cos^4{v(\tilde{Y})\over2}\ d\tilde{Y} & \geq \int_{\left\{ \tilde{Y}\in [\bar{Y}, Y],\ \left| {v(\tilde{Y})\over2} \right| \leq {\pi\over4} \right\}} {\xi^-\over4}\ d\tilde{Y} \\
& \geq {\xi^-\over4}\left[ (Y - \bar{Y}) -{B^2\over 2} \right],
\end{split}
\eeq
and we can define the following exponentially decaying function
\beq\label{exp decay func}
\Gamma(\zeta) := \min\left\{ 1, \exp\left( {B^2\xi^-\over8} - {|\zeta|\xi^-\over4} \right) \right\}.
\eeq
A direct computation shows that
\beq\label{gamma int}
\|\Gamma\|_{L^1} = \left( \int_{|\zeta|\leq {B^2\over2}} + \int_{|\zeta|\geq {B^2\over2}} \right) \Gamma(\zeta)\ d\zeta =B^2 + {8\over \xi^-}.
\eeq

We first show that $P_i, \partial_x P_i \in H^1(\R)\cap W^{1,4}(\R)$, that is,
\beq\label{P and P_x}
P_i, \quad \partial_Y P_i, \quad \partial_x P_i, \quad \partial_Y(\partial_x P_i) \in L^2(\R)\cap L^4(\R).
\eeq
It suffices to obtain the {\it a priori} estimates. For simplicity we will only consider $\partial_x P_i$. The estimates for $P_i$ are entirely similar.

From the definition \eqref{def P} we have
\begin{equation*}
\begin{split}
|\partial_x P_1(Y)| & \leq {\xi^+\over2} \left| \Gamma \ast \LB |u|\LC {3\over8}\sin^2v + u^2\cos^4{v\over2} \RC \RB(Y) \right|,\\
|\partial_x P_2(Y)| & \leq {\xi^+\over8} \left| \Gamma \ast \left( |\sin v| \sin^2{v\over2} \right)(Y) \right|.
\end{split}
\end{equation*}
Therefore by Young's inequality it follows that for $p = 2, 4$,
\begin{align*}
\|\partial_xP_1\|_{L^p} & \leq {\xi^+\over2} \|\Gamma\|_{L^1} \left( {3\over8} \|u\|_{L^p} + \|u\|_{L^\infty}\|u^2\|_{L^p} \right) < \infty, \\
\|\partial_xP_1\|_{L^p} & \leq {\xi^+\over8} \|\Gamma\|_{L^1} \cdot {1\over4}\|v^2\|_{L^p} \leq {\xi^+\over32} \|\Gamma\|_{L^1}\|v\|^{(2p-2)/p}_{L^\infty} \|v\|^{2/p}_{L^2} < \infty.
\end{align*}

Next differentiating $\partial_xP_i$ we have
\begin{align}
\partial_Y(\partial_x P_1)(Y) = & -\left[ {3\over8}u(Y)\sin^2v(Y) + u^3(Y)\cos^4{v(Y)\over2} \right] \xi(Y) \nn\\
& + \frac{1}{2}\left(
\int_Y^\infty - \int_{-\infty}^Y\right)
e^{ -\left|
\int^Y_{\bar Y} (\xi\,\cos^4{\frac{v}{2}})\,(\tilde Y,T) \,d\tilde Y\right|}\nn\\
& \cdot \xi(Y) \cos^4{v(Y)\over2}\text{sgn}(\bar{Y} - Y) \label{pYpxP1}\\
& \cdot \left(\frac{3}{8}u(\bar Y)\, \sin^2{v(\bar Y)}+u^3(\bar Y)\, \cos^4{\frac{v(\bar Y)}{2}}\right) \,\xi\,
 (\bar Y)\,d\bar Y,\nn\\
\partial_Y(\partial_x P_2)(Y) = & -{1\over4}\sin v(Y) \sin^2{v(Y)\over2}\xi(Y) \nn\\
& + \frac{1}{8}\left(
\int_Y^\infty - \int_{-\infty}^Y\right)
e^{ -\left|
\int^Y_{\bar Y} (\xi\,\cos^4{\frac{v}{2}})\,(\tilde Y,T) \,d\tilde Y\right|}\label{pYpxP2}\\
& \cdot \xi(Y) \cos^4{v(Y)\over2}\text{sgn}(\bar{Y} - Y)  \cdot \sin v(\bar Y) \sin^2{v(\bar Y)\over2}\xi(\bar Y) \ d\bar Y.\nn
\end{align}
Therefore
\begin{align*}
|\partial_Y(\partial_x P_1)(Y)| & \leq \xi^+\left( {3\over8}|u| + |u^3| \right) + {(\xi^+)^2\over2} \left| \Gamma \ast \LB |u|\LC {3\over8}\sin^2v + u^2\cos^4{v\over2} \RC \RB \right|, \\
|\partial_Y(\partial_x P_2)(Y)| & \leq {\xi^+\over4}|\sin v| + {(\xi^+)^2\over8} \left| \Gamma \ast \left( |\sin v| \sin^2{v\over2} \right) \right|.
\end{align*}
From previous estimates it suffices to bound $\xi^+\left( {3\over8}|u| + |u^3| \right)$ and ${\xi^+\over4}|\sin v|$ in $L^p$ for $p = 2, 4$. Indeed,
\begin{align*}
\left\| \xi^+\left( {3\over8}|u| + |u^3| \right) \right\|_{L^p} & \leq \xi^+\left( {3\over8}\|u\|_{L^p} + \|u\|^2_{L^\infty} \|u\|_{L^p} \right) <\infty,\\
\left\| {\xi^+\over4}|\sin v| \right\|_{L^p} & \leq {\xi^+\over4} \|v\|_{L^p} < \infty.
\end{align*}
The similar estimates can be obtained for $P_i, \partial_Y P_i$, and hence we prove \eqref{P and P_x}. Thus the maps defined in \eqref{P map} take values in $H^1(\R)\cap W^{1,4}(\R)$.

Next we check the Lipschitz continuity of the map given in \eqref{P map}. This can be proved by showing that for $(u, v, \xi)\in \Omega$, the partial derivatives
\beq\label{partial deriv}
\partial_u P_i, \ \partial_v P_i, \  \partial_\xi P_i, \ \partial_u (\partial_xP_i), \ \partial_v (\partial_xP_i), \  \partial_\xi (\partial_xP_i)
\eeq
are uniformly bounded linear operators from the appropriate spaces into $H^1(\R)\cap W^{1,4(\R)}$. Again we only detail the argument for $\partial_u (\partial_xP_1)$. All other derivatives can be estimated by the same methods.

For a given $(u, v, \xi)\in \Omega$ and for a test function $\phi \in H^1(\R)$, the operators $\partial_u (\partial_xP_1)$ and $\partial_u(\partial_Y\partial_xP_1)$ are defined by
\begin{align*}
\left[ \partial_u (\partial_xP_1)\cdot \phi \right](Y) = & \frac{1}{2}\left(
\int_Y^\infty - \int_{-\infty}^Y\right) e^{ -\left|
\int^Y_{\bar Y} (\xi\,\cos^4{\frac{v}{2}})\,(\tilde Y,T) \,d\tilde Y\right|} \\
& \cdot \left[ {3\over8} \sin^2v(\bar Y) + 3u^2(\bar Y) \cos^4{v(\bar Y)\over2} \right] \xi(\bar Y) \phi(\bar Y)\ d\bar Y,\\
\left[ \partial_u(\partial_Y\partial_xP_1) \cdot \phi \right] = & -\left[ {3\over8}\sin^2v(Y) + 3u^2(Y)\cos^4{v(Y)\over2} \right] \xi(Y)\cdot \phi(Y) \\
& + \frac{1}{2}\left(
\int_Y^\infty - \int_{-\infty}^Y\right)
e^{ -\left|
\int^Y_{\bar Y} (\xi\,\cos^4{\frac{v}{2}})\,(\tilde Y,T) \,d\tilde Y\right|}\\
& \cdot \xi(Y) \cos^4{v(Y)\over4}\text{sgn}(\bar{Y} - Y) \\
& \cdot \left(\frac{3}{8}\sin^2{v(\bar Y)}+3u^2(\bar Y)\, \cos^4{\frac{v(\bar Y)}{2}}\right) \,\xi\,
 (\bar Y) \phi(\bar Y)\,d\bar Y.
\end{align*}
Therefore for $p = 2, 4$,
\begin{align*}
\left\| \partial_u (\partial_xP_1)\cdot \phi \right\|_{L^p} & \leq {\xi^+\over2} \left\| \Gamma \ast \left( {3\over8} \sin^2v + 3u^2 \cos^4{v\over2} \right)  \right\|_{L^p} \|\phi\|_{L^\infty}\\
& \leq {\xi^+\over2} \|\Gamma\|_{L^1} \left( {3\over8}\|v\|_{L^p} + 3\|u^2\|_{L^p} \right) \|\phi\|_{H^1},\\
\left\| \partial_u(\partial_Y\partial_xP_1) \cdot \phi \right\|_{L^p} & \leq \xi^+ \left\|  {3\over8} \sin^2v + 3u^2 \cos^4{v\over2}  \right\|_{L^p} \|\phi\|_{L^\infty} \\
& \quad + {(\xi^+)^2\over2} \left\| \Gamma \ast \left( {3\over8} \sin^2v + 3u^2 \cos^4{v\over2} \right)  \right\|_{L^p} \|\phi\|_{L^\infty}\\
& \leq {\xi^+} \left(1+ {\xi^+\over2} \|\Gamma\|_{L^1} \right) \left( {3\over8}\|v\|_{L^p} + 3\|u^2\|_{L^p} \right) \|\phi\|_{H^1}.
\end{align*}
From the above two estimates we know that $\partial_u\partial_x P_1$ is a bounded linear operator from $H^1(\R)\cap W^{1,4}(\R)$ into $H^1(\R)\cap W^{1,4}(\R)$. The boundedness of other partial derivatives in \eqref{partial deriv} can be obtained in a similar way. Thus we have established the Lipschitz continuity of the maps \eqref{P map}.

Now we can apply the standard ODE theory in Banach spaces to prove that the Cauchy problem \eqref{semi}-\eqref{initial} admits a unique solution on $[0, T]$ for some $T>0$, establishing the local wellposedness.
\end{proof}

The next step is to extend the solution obtained in Lemma \ref{lem local exist} globally.
\begin{theorem}[Global wellposedness]\label{thm global exist}
Let $u_0\in H^1(\R)\cap W^{1,4}(\R)$. Then the initial value problem \eqref{semi}-\eqref{initial} admits a unique solution defined for all $T\geq 0$.
\end{theorem}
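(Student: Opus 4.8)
The plan is a continuation argument. Lemma \ref{lem local exist} produces a solution on a time interval whose length depends only on the constants $A,B,\xi^-,\xi^+$ defining the domain $\Omega$, so it suffices to show that, along the flow of \eqref{semi}--\eqref{initial}, the quantity
\[
\Psi(T):=\|u\|_{H^1}+\|u\|_{W^{1,4}}+\|v\|_{L^2}+\|v\|_{L^\infty}+\|\xi\|_{L^\infty}+\|1/\xi\|_{L^\infty}
\]
(all norms taken in the $Y$ variable) stays finite on every bounded interval $[0,T]$; the local result then iterates to yield a solution for all $T\ge0$.

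The first step is to transport the conservation laws \eqref{0cons E}, \eqref{0cons F} to the $(T,Y)$ coordinates. Using \eqref{Y_def}, \eqref{def_lgq} and the identities \eqref{tri}, the relation $dx=\xi\cos^4\tfrac{v}{2}\,dY$ turns $\mathcal E$ and $\mathcal F$ into $E(T)=\int_\R[u^2\cos^4\tfrac{v}{2}+\tfrac14\sin^2v]\,\xi\,dY$ and $F(T)=\int_\R[u^4\cos^4\tfrac{v}{2}+\tfrac12 u^2\sin^2v-\tfrac13\sin^4\tfrac{v}{2}]\,\xi\,dY$, and a direct differentiation in $T$, substituting the right-hand sides of \eqref{semi} (mirroring the derivation of \eqref{cons1} and \eqref{cons2}), shows $dE/dT=dF/dT=0$. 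The crucial payoff is a set of \emph{$T$-independent} bounds: writing $u(Y)^2=2\int_{-\infty}^Y u\,u_Y\,dY'$ with $u_Y=\sin\tfrac{v}{2}\cos^3\tfrac{v}{2}\,\xi$ and applying Cauchy--Schwarz against the two nonnegative pieces of $E$ gives $\|u\|_{L^\infty}^2\le 2E=2\mathcal E(0)$, reproducing the Sobolev bound; conservation of $F$ then yields the analogue of \eqref{L^4 control}--\eqref{L^3 control}, a constant bound on $\int|\sin\tfrac{v}{2}|^3|\cos\tfrac{v}{2}|\,\xi\,dY$ (the new-coordinate form of $\|u_x\|_{L^3(dx)}^3$), which feeds \eqref{bound on P} and bounds $\|P_i\|_{L^\infty},\|\partial_xP_i\|_{L^\infty}$ by constants depending only on $\mathcal E(0),\mathcal F(0)$.

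With these constants available I would control $\xi$ and $v$ directly from \eqref{semi}: since $|\partial_T\log\xi|\le|2u^3+u|+2(\|P_1\|_{L^\infty}+\|\partial_xP_2\|_{L^\infty})=:C_0$ and $\xi(0,\cdot)\equiv1$, Gronwall gives $e^{-C_0T}\le\xi(T,Y)\le e^{C_0T}$, so $\|\xi\|_{L^\infty}$ and $\|1/\xi\|_{L^\infty}$ are finite on every bounded interval; likewise $|v_T|\le|u|+2|u|^3+2(\|P_1\|_{L^\infty}+\|\partial_xP_2\|_{L^\infty})\le C_1$ and $\|v(0)\|_{L^\infty}\le\pi$ give $\|v\|_{L^\infty}\le\pi+C_1T$ (harmless because \eqref{semi} is $2\pi$-periodic in $v$). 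For the remaining stretched norms I would use that $u_T=-(\partial_xP_1+P_2)$ is a plain ODE driven by the nonlocal terms, so $\partial_T\|u\|_{L^p(dY)}\le\|\partial_xP_1+P_2\|_{L^p(dY)}$ and $\partial_T\|u_Y\|_{L^p(dY)}\le\|\partial_Y(\partial_xP_1+P_2)\|_{L^p(dY)}$ for $p=2,4$, while $\partial_T\|v\|_{L^2}^2=2\int v\,v_T\,dY\le2\|v\|_{L^2}\|v_T\|_{L^2}$. The $L^2\cap L^4(dY)$ estimates on $P_i,\partial_xP_i,\partial_YP_i,\partial_Y(\partial_xP_i)$ obtained in the proof of Lemma \ref{lem local exist} (via Young's inequality with the kernel $\Gamma$ of \eqref{gamma int}) bound each right-hand side by a polynomial in $\|u\|_{L^2\cap L^4},\|v\|_{L^2},\|v\|_{L^\infty},\xi^\pm$, i.e.\ by $C(T)\,(1+\Psi(T))$ with $C(\cdot)$ locally integrable. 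Collecting everything gives one linear differential inequality $\Psi'(T)\le C(T)(1+\Psi(T))$, and Gronwall yields $\Psi(T)\le(1+\Psi(0))\exp(\int_0^T C(s)\,ds)<\infty$ for all $T$.

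The main obstacle is precisely the apparent circularity in the last step: $P_i$ and their derivatives are estimated only in terms of the norms $\|u\|,\|v\|_{L^2}$ and the bounds $\xi^\pm$ that themselves make up $\Psi$, so a naive estimate would close only on a short interval. The reason the conservation laws are indispensable is that $E$ and $F$ supply genuinely $T$-independent bounds on $\|u\|_{L^\infty}$ and on $\|P_i\|_{L^\infty},\|\partial_xP_i\|_{L^\infty}$, which in turn pin down $\xi^\pm$ as explicit functions of $T$ alone; once $\|u\|_{L^\infty}$ and $\xi^\pm$ are known independently of $\Psi$, the feedback loop is broken and the remaining norms close by a truly linear Gronwall inequality. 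A secondary, routine point is verifying that $E$ and $F$ are exactly conserved by the \emph{smooth} semilinear flow (a direct computation from \eqref{semi}), rather than merely for classical solutions of \eqref{nv}.
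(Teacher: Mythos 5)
Your overall strategy is the same as the paper's (transport the conservation laws to the $(T,Y)$ variables, extract $T$-independent $L^\infty$ bounds on $u$ and on $P_i,\partial_x P_i$, control $\xi$ and $v$ pointwise, then close the remaining norms by Gronwall), but there is a genuine gap at the step you call ``a truly linear Gronwall inequality.'' Breaking the feedback loop requires more than pinning down $\|u\|_{L^\infty}$ and $\xi^\pm$: every $L^p(dY)$ estimate on $P_i$, $\partial_xP_i$ and their $Y$-derivatives goes through Young's inequality with the convolution kernel $\Gamma$, and in your proposal $\|\Gamma\|_{L^1}$ is taken from the local lemma, i.e.\ from \eqref{gamma int}, where it equals $B^2+8/\xi^-$ with $B\geq \|v\|_{L^2}$. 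Since $\|v\|_{L^2}$ is itself one of the unknowns in $\Psi$, the resulting differential inequalities are not linear: for instance $\frac{d}{dT}\|u\|^2_{L^2}\lesssim \|u\|_{L^2}\,\|\Gamma\|_{L^1}\,\big(\|u\|_{L^2}+\|v\|_{L^2}\big)\sim \big(\|v\|^2_{L^2}+8/\xi^-\big)\big(\|u\|^2_{L^2}+\cdots\big)$, which is cubic/quartic in the norms. A Riccati-type inequality of this kind only yields bounds on a short time interval, so the continuation argument does not close. The paper's proof contains exactly the missing ingredient: it re-derives the exponential decay of the kernel using the conservation laws rather than $\|v\|_{L^2}$. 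Concretely, on the set where $\big|\tfrac{v}{2}\big|\geq\tfrac{\pi}{4}$ one has $\sin^4\tfrac{v}{2}\geq\tfrac14$, and conservation of $F$ together with $\|u\|^2_{L^\infty}\leq E(0)$ gives $\int_\R\xi\sin^4\tfrac{v}{2}\,dY\leq 3\big[2E(0)^2-F(0)\big]$, while conservation of $E$ gives $\int_\R\xi\sin^2\tfrac{v}{2}\cos^2\tfrac{v}{2}\,dY\leq E(0)$; this bounds the ``bad set'' contribution by a constant $K_0$ depending only on the initial energies, so that $\|\Gamma\|_{L^1}\leq 8e^{A_0T}(K_0+1)$ with no reference to $\|v\|_{L^2}$. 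Only after this replacement are the right-hand sides genuinely linear in the remaining norms, and Gronwall closes globally. You need to add this kernel estimate; without it the last step of your argument fails.

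A secondary, but also real, issue: you use $u_Y=\xi\sin\tfrac{v}{2}\cos^3\tfrac{v}{2}$ as if it followed from the coordinate definitions \eqref{Y_def}, \eqref{def_lgq}. But in this theorem $(u,v,\xi)$ are independent unknowns of the abstract ODE system \eqref{semi}--\eqref{initial}; the paper explicitly refuses to use the derivation of Section 3 here. The identity \eqref{ueq} is a constraint that must be shown to \emph{propagate}: the paper verifies that $\big(\tfrac12\xi\sin v\cos^2\tfrac{v}{2}\big)_T=u_{TY}$ (computation \eqref{uyt}) and that the identity holds at $T=0$, whence it holds for all $T$. This is not cosmetic: without \eqref{ueq} neither your ``direct differentiation'' proving $dE/dT=dF/dT=0$ nor the bound $\|u\|^2_{L^\infty}\leq 2E$ can be carried out purely from \eqref{semi}, so your acknowledged ``routine point'' about verifying conservation for the semilinear flow actually hides this prerequisite lemma.
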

\begin{proof}
From the previous lemma we know that in order to continue the local solution, one needs to show that for all $T<\infty$,
\beq\label{global control}
\|u\|_{H^1} + \|u\|_{W^{1,4}} + \|v\|_{L^2} + \|v\|_{L^\infty} + \|\xi\|_{L^\infty} + \left\|{1\over \xi}\right\|_{L^\infty} < \infty.
\eeq

First we claim that
\beq\label{ueq}
u_Y=\frac{u_x}{Y_x}=\frac{u_x}{1+u_x^2}\,\frac{1}{1+u_x^2}\,\xi=\frac{1}{2}\,\xi\,\sin{v}\, \cos^2{\frac{v}{2}}.
\eeq
To get \eqref{ueq}, one needs to check
\[
u_{YT}=u_{TY}.
\]
From \eqref{semi} and \eqref{def P} we have
\bes
u_{YT}\label{uyt}
&=&\left(\frac{1}{2}\,\xi\,\sin{v}\, \cos^2{\frac{v}{2}}\right)_T\nn\\
	&=&\textstyle \frac{1}{2}\,\xi\,\sin^2{v}\, \cos^2{\frac{v}{2}}\,\Big\{(2u^3+u)\,
	\sin^2\frac{v}{2} -2\, (P_1+\partial_xP_2) +(2 u^3+u)\cos^2 \frac{v}{2} \Big\}\nn\\[2mm]
		&&\textstyle+\frac{1}{2}\,\xi\,(1-4\sin^2{\frac{v}{2}})\,
		\Big(-u \sin^2{\frac{v}{2}}+2u^3\,\cos^2{\frac{v}{2}}
		-2\cos^2{\frac{v}{2}}\, (P_1+\partial_xP_2)\Big)\, \cos^2{\frac{v}{2}}
		\nn\\
		&=&\textstyle \xi\left\{ \frac{3}{8}u\, \sin^2{v}+u^3\, \cos^4{\frac{v}{2}}-\cos^4{\frac{v}{2}}(P_1+\partial_xP_2)\right\}\nn\\
		&=&u_{TY}, \label{equality}
\ees
Moreover the initial conditions imply that at $T = 0$
\[
u_Y = \frac{1}{2}\,\sin{v}\, \cos^2{\frac{v}{2}}, \quad \text{and }\quad \xi = 1,
\]
which means that \eqref{ueq} holds initially. Therefore \eqref{equality} indicates that \eqref{ueq} holds for all $T$, as long as the solution exists.

Next we check the conservation laws \eqref{0cons E} and \eqref{0cons F}. In the new system, the conservation of ${E}(T)$ and ${F}(T)$ read
\begin{align}\label{cons E new}
&{E}(T) := \int_\R \LC u^2\cos^2{v\over2} + \sin^2{v\over2} \RC \xi \cos^2{v\over2} \ dY = {E}(0), \\
&{F}(T) := \int_\R \LC u^4 \cos^4{v\over2} + 2u^2 \cos^2{v\over2} \sin^2{v\over2} - {1\over3} \sin^4{v\over2} \RC \xi \ dY = {F}(0). \label{cons F new}
\end{align}

Some other identities about the $Y$-derivatives which will be used are the following.
\beq\label{other Y deriv}
\begin{split}
\partial_Y P_i & = \xi \cos^4{v\over2} \partial_x P_i, \quad i = 1,2,\\
\partial_Y(\partial_x P_1) & = - \LC {3\over8} u \sin^2 v + u^3 \cos^4 {v\over2} \RC\xi + \xi \cos^4{v\over2} P_1,\\
\partial_Y(\partial_x P_2) & = - {1\over4} \sin v \cos^2{v\over2} \xi+ \xi \cos^4{v\over2} P_2.
\end{split}
\eeq

Using \eqref{semi} we can write
\begin{equation*}
\begin{split}
{d{E}\over dT} = & \int_\R \LB \LC u^2\cos^4{v\over2} + {1\over4}\sin^2 v \RC \xi \RB_T\ dY \\
= & \int_\R \xi \left\{ -2u\cos^4{v\over2} (\partial_x P_1 + P_2) - \LC 2u^2 \cos^3{v\over2}\sin{v\over2} - {1\over2}\sin v\cos v \RC \right. \\
& \quad\quad \cdot \LB -u\sin^2{v\over2} + 2u^3\cos^2{v\over2} - 2\cos^2{v\over2}(P_1 + \partial_x P_2) \RB \\
& \quad\quad \left.+ \sin v\LC u^2\cos^4{v\over2} + {1\over4}\sin^2v \RC \LB (2u^3+u) - 2(P_1 + \partial_x P_2) \RB\right\} \ dY.
\end{split}
\end{equation*}
Then applying \eqref{ueq} and \eqref{other Y deriv} we end up with
\[
{d{E}\over dT} = \int_\R \LB u^4 - 2u(P_1 + \partial_x P_2) \RB_Y \ dY = 0,
\]
where in deriving the last equality we have used the asymptotic property $\lim_{|Y|\to\infty} u(Y) = 0$ as $u\in H^1(\R)$, and the fact that $P_i, \partial_x P_i \in H^1(\R)$ and hence uniformly bounded.

Similarly for the conservation of $\mathcal{F}$, after a long calculation we obtain
\begin{align*}
{d{F} \over dT} & = \int_\R \left\{ u^6 +{4\over3}\LB u^3(P_1 + \partial_x P_2) + (P_2 + \partial_xP_1)^2 - (P_1 + \partial_xP_2)^2\RB \right\}_Y\ dY\\
& = 0,
\end{align*}
where the last equality follows from the fact that $u, P_i, \partial_x P_i \in H^1(\R)$.

Now we have proved the conservation laws \eqref{cons E new} and \eqref{cons F new} in the new variables
along any solution of \eqref{semi}-\eqref{initial}. Hence a uniform {\it a priori} estimate on $\|u(T)\|_{L^\infty}$ can be obtained as
\beq\label{control u}
\begin{split}
\sup_{Y}|u^2(T, Y)| & \leq 2\int_\R |uu_Y|\ dY \leq \int_\R |u| \left|\sin v \cos^2{v\over2} \right|\xi\ dY \\
& \leq \int_\R 2|u| \left|\sin {v\over2} \cos^3{v\over2} \right|\xi\ dY \leq {E}(0).
\end{split}
\eeq

Next we want to use the equation for $\xi$ in \eqref{semi} to derive the $L^\infty$ bound for $\xi$. From definition \eqref{def P}  and \eqref{cons E new} we have
\beq\label{est P1}
\|P_1(T)\|_{L^\infty} \leq {1\over2} \left\| |u| \LC {3\over2} \sin^2{v\over2}\cos^2{v\over2} + u^2\cos^4{v\over2} \RC\xi \right\|_{L^1} \leq {3\over4} {E}(0)^{3/2}.
\eeq
For $\partial_x P_2$, we have
\[
\|\partial_xP_2(T)\|_{L^\infty} \leq {1\over8} \LC \int_\R \xi \sin^2v  \ dY \RC^{1/2} \LC \int_\R \xi \sin^4{v\over2}  \ dY \RC^{1/2}
\]
From definition \eqref{cons E new} we know
\[
\int_\R \xi \sin^2v  \ dY = \int_\R 4\xi \sin^2{v\over2} \cos^2{v\over2}  \ dY \leq 4 {E}(0).
\]
Further using \eqref{cons F new} and \eqref{control u} it follows that
\begin{align*}
\int_\R \xi \sin^4{v\over2}  \ dY & = 3 \LB \int_\R \LC u^4 \cos^4{v\over2} + 2u^2 \cos^2{v\over2} \sin^2{v\over2} \RC \xi \ dY - {F}(0) \RB\\
& \leq 3 \LB 2{E}(0)^2 - {F}(0) \RB.
\end{align*}
Putting together the above two estimates we arrive at
\beq\label{est pxP2}
\|\partial_xP_2(T)\|_{L^\infty} \leq {1\over4} \sqrt{3{E}(0) \LB 2{E}(0)^2 - {F}(0) \RB} = {1\over4}K,
\eeq
where $K$ is defined in \eqref{L^3 control}. This way we have in fact recovered the estimates \eqref{bound on P} in the new variables.

With the estimates \eqref{est P1}-\eqref{est pxP2}, it is now clear from the third equation in \eqref{semi} that
\[
|\xi_T| \leq \LB 2{E}(0)^{3/2} + {E}(0)^{1/2} + 2\LC {3\over4} {E}(0)^{3/2} + {1\over4}K \RC \RB \xi =: A_0 \xi.
\]
Together with the initial condition $\xi(0, Y) = 1$ we know that
\beq\label{control xi}
e^{-A_0T} \leq \xi(T) \leq e^{A_0T}.
\eeq

Similarly when plugging the estimates \eqref{control u} and \eqref{est P1}-\eqref{est pxP2} into the second equation of \eqref{semi} we find
\[
|v_T| \leq 2{E}(0)^{3/2} + {E}(0)^{1/2} + 2\LC {3\over4} {E}(0)^{3/2} + {1\over4}K \RC = A_0.
\]
Hence
\beq\label{control v}
\|v(T)\|_{L^\infty} \leq \|v(0)\|_{L^\infty} + A_0T.
\eeq

To obtain the estimates on $\|u(T)\|_{L^2}$ we multiply $u$ to the first equation of \eqref{semi} to deduce
\[
{d\over dT}\LC {1\over2}\|u(T)\|^2_{L^2} \RC \leq \|u(T)\|_{L^\infty} \LC \|\partial_x P_1\|_{L^1} + \|P_2\|_{L^1} \RC.
\]
Similarly we have
\begin{align*}
{d\over dT}\LC {1\over4}\|u(T)\|^4_{L^4} \RC & \leq \|u(T)\|^3_{L^\infty} \LC \|\partial_x P_1\|_{L^1} + \|P_2\|_{L^1} \RC,\\
{d\over dT}\LC {1\over2}\|u_Y(T)\|^2_{L^2} \RC & \leq \|u_Y(T)\|_{L^\infty} \LC \|\partial_Y(\partial_x P_1)\|_{L^1} + \|\partial_YP_2\|_{L^1} \RC,\\
{d\over dT}\LC {1\over4}\|u_Y(T)\|^2_{L^2} \RC & \leq \|u_Y(T)\|^3_{L^\infty} \LC \|\partial_Y(\partial_x P_1)\|_{L^1} + \|\partial_YP_2\|_{L^1} \RC.
\end{align*}

From \eqref{ueq} and \eqref{control xi} we know that
\beq\label{control uY}
\|u_Y(T)\|_{L^\infty} \leq {1\over2}\|\xi(T)\|_{L^\infty} \leq {1\over2}e^{A_0T}.
\eeq
Therefore in order to show that $\|u(T)\|_{H^1} + \|u(T)\|_{W^{1,4}}$ is bounded for $T < \infty$, it suffices to derive the $W^{1,1}$ bound for $\partial_x P_1$ and $P_2$. For simplicity we only consider $\|\partial_Y(\partial_x P_1)\|_{L^1}$. The other terms can be bounded the same way.

First we look for a lower bound of $|\int^Y_{\bar Y} \xi(\bar Y) \cos^4{v(\bar Y)\over 2} \ d\bar Y|$. Since we have restricted $v\in [-\pi, \pi]$ with endpoints identified, it follows that for $Y > \bar Y$
\begin{align*}
\int^Y_{\bar Y} \xi(\tilde Y) \cos^4{v(\tilde Y)\over 2} \ d\tilde Y
=&
\int^Y_{\bar Y} \xi(\tilde Y) \cos^2{v(\tilde Y)\over 2} \ d\tilde Y-\int^Y_{\bar Y} \xi(\tilde Y) \cos^2{v(\tilde Y)\over 2}\sin^2{v(\tilde Y)\over 2} \ d\tilde Y\\
\geq&
 \int_{\left\{ \tilde{Y}\in [\bar{Y}, Y],\ \left| {v(\tilde{Y})\over2} \right| \leq {\pi\over4} \right\}} \xi(\tilde Y) \cos^2{v(\tilde Y)\over2}\ d\tilde{Y} - {E}(0)\\
\geq&  \int_{\left\{ \tilde{Y}\in [\bar{Y}, Y],\ \left| {v(\tilde{Y})\over2} \right| \leq {\pi\over4} \right\}} {1\over4}\xi(\tilde Y)\ d\tilde Y - E(0) \\
\geq& {\xi^-\over4}(Y - \bar Y) -  \int_{\left\{ \tilde{Y}\in [\bar{Y}, Y],\ \left| {v(\tilde{Y})\over2} \right| \geq {\pi\over4} \right\}} {1\over4}\xi(\tilde Y)\ d\tilde Y - E(0) \\
\geq& {\xi^-\over4}(Y - \bar Y) -  \int_{\left\{ \tilde{Y}\in [\bar{Y}, Y],\ \left| {v(\tilde{Y})\over2} \right| \geq {\pi\over4} \right\}} \xi(\tilde Y)\sin^4{v(\tilde Y)\over 2}\ d\tilde Y - E(0) \\
\geq&  {\xi^-\over4}(Y - \bar{Y}) - 3\LB 2E(0)^2 -F(0) \RB - {E}(0), 
\end{align*}
where now $\xi^- = e^{-A_0T}$. Thus as before we define the kernel function
\[
\Gamma(\zeta) := \min\left\{ 1, \exp\left( K_0 - {|\zeta|\xi^-\over4} \right) \right\},
\]
with the property that
\[
\|\Gamma\|_{L^1} = {8(K_0 + 1) \over \xi^-} = 8e^{A_0T}(K_0 + 1),
\]
where $K_0 = 3\LB 2E(0)^2 -F(0) \RB + {E}(0)$. Thus upon using \eqref{pYpxP1} we have
\begin{align*}
\|\partial_Y(\partial_x P_1)\|_{L^1} \leq & \left\| u\LC {3\over8}\sin^2 v + u^2\cos^4{v\over2}  \RC\xi \right\|_{L^1} \\
& + {1\over2}\|\xi\|_{L^\infty} \left\| \Gamma \ast u\LC {3\over8}\sin^2 v + u^2\cos^4{v\over2}  \RC\xi \right\|_{L^1}\\
\leq & \LC 1 + {1\over2}\|\xi\|_{L^\infty} \|\Gamma\|_{L^1} \RC\left\| u\LC {3\over8}\sin^2 v + u^2\cos^4{v\over2}  \RC\xi \right\|_{L^1} \\
\leq & {3\over2} \LB 1 + 4 e^{2A_0T}(K_0 + 1) \RB {E}(0)^{3/2}.
\end{align*}

Lastly we try to bound $\|v\|_{L^2}$. Multiplying $v$ to the second equation of \eqref{semi} we have
\begin{align*}
{d\over dT}\LC {1\over2}\|v(T)\|^2_{L^2} \RC \leq & \LC \|u\|_{L^2} + 2\|u\|_{L^\infty}\|u\|_{L^2} \RC \|v\|_{L^2} \\
& + \LC \|P_1\|_{L^1} + \|\partial_x P_2\|_{L^1} \RC  \|v\|_{L^\infty}.
\end{align*}
The bounds for $\|P_1\|_{L^1}$ and $\|\partial_x P_2\|_{L^1}$ can be obtained in the same way as before. Thus we can prove that $\|v(T)\|_{L^2}$ remains bounded for $T < \infty$. This proves \eqref{global control} and hence completes the proof of the theorem.
\end{proof}

\subsection{Inverse transformation and global existence of solutions to \eqref{nv}}
Now we use an inverse transformation on the solution of the semi-linear system to construct
the solution to \eqref{nv}.

We define $x$ and $t$ as functions of $T$ and $Y$:
\[
x(T,Y)=\bar x(Y)+\int_0^T u^2(\tau,Y)\, d\tau, \quad t = T.
\]
So
\bel{x_T}
\frac{\partial}{\partial_T}x(T,Y)=u^2(T,Y), \qquad x(0,Y)=\bar x(Y),
\eeq
which says that $x(t,Y)$ is a characteristic.

In the rest of this section, we will show that the following function
\[
u(t,x)=u(T,Y)\qquad\hbox{if}
\qquad x=x(T,Y),\quad t=T
\]
provides a weak solution of \eqref{nv}-\eqref{ID} which satisfies the properties
listed in Theorem \ref{main}. We will proceed in several steps.

\medskip

\paragraph{{\bf Step 1.}}
We show that the image of map from $(T,Y)$ to $(t,x)$ is the entire $\R^2$.
In fact, because $\|u^2\|_{L^\infty}\leq E_0$,
\[
\bar x(Y)- E_0 T\leq x(T,Y)\leq \bar x(Y)+ E_0 T,
\]
then from the definition \eqref{Y_def} of $Y$ we have
\[
\lim_{Y\rightarrow\pm \infty} x(T,Y)=\pm \infty.
\]
So the image of continuous map $(t,Y)\rightarrow(t,x(t,Y))$ covers the entire
plane $\R^2$.
\medskip

\paragraph{{\bf Step 2.}}
Now we derive the equation
\bel{X_y}
x_Y=\xi\cos^4{\frac{v}{2}}.
\eeq
In fact, a direct calculation using \eqref{semi} implies that
\[
\LC \xi\cos^4{\frac{v}{2}} \RC_T=(u^2)_Y=(x_T)_Y=(x_Y)_T,
\]
where we have also used \eqref{x_T}. From the definition \eqref{def_lgq} we know that
$x_Y=\xi\cos^4{\frac{v}{2}}$ holds for almost every $\xi\in \R$ initially, and so
equation \eqref{X_y} is true for any $T\geq0$ and $Y\in\R$.
As a consequence, for any fixed $T$, $x(T,Y)$ is non-decreasing on $Y$.

Furthermore, for any smooth function $f$:
\bel{f_T}
f_T(T,Y)=f_t(t,x)+u^2 f_x(t,x),\quad
f_Y(T,Y)=\xi\cos^4{\frac{v}{2}}\cdot f_x(t,x).
\eeq
Similar, by equation of $u_Y$ in \eqref{ueq},  when $\cos{\frac{v}{2}}\neq0$, we have
\[
u_x=\tan{\frac{v}{2}},
\]
hence also recover all equations in \eqref{def_lgq} and \eqref{tri}.

\medskip

\paragraph{{\bf Step 3.}}
In this key step, we show that $u(x,t)=u(x(Y,T),t(T))$ is well defined although the map from $(Y,T)$
to $(x,t)$ is not one-to-one. In fact, if
\[
x(T^*,Y_1)=x(T^*,Y_2),
\]
with $Y_1<Y_2$, then
\[
x(T^*,Y)=x(T^*,Y_1) \quad\hbox{for any}\quad Y\in[Y_1, Y_2]
\]
because of the monotonicity of $x(T,Y)$ on $Y$ given in \eqref{X_y}.
Then still by \eqref{X_y},
\[
\cos{\frac{v}{2}}(T^*,Y)=0,\quad\hbox{when}\quad Y\in[Y_1,Y_2].
\]
Finally using the equation \eqref{ueq} on $u$, we know
\[
u_Y(T^*,Y)=0,\quad\hbox{when}\quad Y\in[Y_1,Y_2],
\]
hence
\[
u(T^*,Y_1)=u(T^*,Y_2),
\]
which means that $u(t,x)=u(t(T),x(T,Y))$ is well defined.
\medskip

\paragraph{{\bf Step 4.}}
Now we discuss the regularity of $u(t,x)$ and energy conservations.
Recall, by \eqref{cons E new},  energies $E(T)$ and $F(T)$ are both conservative
on $(T,Y)$ coordinates. Using this information, we prove \eqref{energy1}
and \eqref{energy2} in the Theorem \ref{main}.

For any given time $t$,
\begin{equation}\label{cons E}
\begin{split}
\mathcal{E}(0)=E(0)=E(T)
	&= \int_\R \LC u^2\cos^2{v\over2} + \sin^2{v\over2} \RC \xi \cos^2{v\over2}\,(T) \ dY\\
	&= \int_{\R\cap\{ \cos \frac{v}{2}\neq 0\}} \LC u^2\cos^2{v\over2} + \sin^2{v\over2} \RC \xi \cos^2{v\over2}\,(T)  \ dY\\
	&=\int_{\R\cap\{ \cos \frac{v}{2}\neq 0\}} \LC u^2+u_x^2 \RC (t)  \ dY\\
	&=\mathcal{E}(t)
\end{split}
\end{equation}
which proves \eqref{energy1}.

Similarly for any given time $t$,
\begin{equation}\label{cons F}
\begin{split}
\mathcal{F}(0)=F(0)=F(T)
	&= \int_\R \LC u^4 \cos^4{v\over2} + 2u^2 \cos^2{v\over2} \sin^2{v\over2} - {1\over3} \sin^4{v\over2} \RC \xi  \,(T)\ dY\\
	&=\int_{\R\cap\{ \cos \frac{v}{2}\neq 0\}} \LC u^4 \cos^4{v\over2} + 2u^2 \cos^2{v\over2} \sin^2{v\over2}  \RC \xi \,(T) \ dY\\
	&\quad-\int_{\R} {1\over3} \sin^4{v\over2}  \xi \,(T) \ dY\\
		&\leq\int_{\R\cap\{ \cos \frac{v}{2}\neq 0\}} \LC u^4 \cos^4{v\over2} + 2u^2 \cos^2{v\over2} \sin^2{v\over2}  \RC \xi  \,(T)\ dY\\
	&\quad-\int_{\R\cap\{ \cos \frac{v}{2}\neq 0\}} {1\over3} \sin^4{v\over2}  \xi  \ dY\\
	&=\int_{\R}\LC u^4 + 2u^2u^2_x - {1\over3}u^4_x \RC(t)\ dx\\
	&= \mathcal{F}(t)
\end{split}
\end{equation}
which proves \eqref{energy2}.

As a consequence, we know
$u(t,\cdot)\in W^{1,4}\cap W^{1,2}$  for any $t$.  On the other hand
\[
\frac{d u(t,x(t,Y))}{dt}=u_T<\infty.
\]
So $u(t,x)$ is H\"older continuous with exponent $3/4$ on both $t$ and $x$ by Sobolev embedding inequalities.

\medskip

\paragraph{{\bf Step 5.}}
In this part, we show that the map $t\rightarrow u(t,\cdot)$ is Lipschitz continuous under $L^4(\R)$ distance. We consider any time interval $[\tau,\tau+h]$. For any given point $(\tau,\hat x)$, we find the characteristic $x(T): \{ T\rightarrow x(T,Y)\}$ passing through $(\tau, \hat x)$, i.e. $x(\tau)=\hat x$.

Since characteristic speed $u^2$ satisfies $\|u^2\|_{L^\infty}\leq E_0$ and also by the first equation in (\ref{semi}), we have
\bes
|u(\tau+h,\hat x)-u(\tau,\hat x)|
&\leq& |u(\tau+h,\hat x)-u(\tau+h,x(\tau+h,Y))|
\nn\\[2mm]
&& +\  |u(\tau+h, x(\tau+h,Y))-u(\tau, x(\tau,Y))|\nn\\
&\leq&\sup_{|y-\hat x|\leq E_0 h}|u(\tau+h,y)- u(\tau+h,\hat x)|
+\int_\tau^{\tau+h}|\partial_x P_1 +P_2|\, dt\,.\nn
\ees
Then integrating it, we obtain
\bes
\int_{\R}|u(\tau+h,\hat x)-u(\tau,\hat x)|^4\,dx\nn
&\leq&C_1 \LB \int_{R}\left(\int_{\hat x- E_0 h}^{\hat x+ E_0 h}|u_x(\tau+h,y)|\,dy\right)^4\,dx \right.\nn\\
&&
\quad + \left. \int_{\R}\left(\int_\tau^{\tau+h}|\partial_x P_1 +P_2|\, dt\right)^4\,dx \RB \nn\\
&\leq&C_2 h^3 \LB \int_{\R} \int_{\hat x- E_0 h}^{\hat x+ E_0 h}|u_x(\tau+h,y)|^4\,dy\, dx \right. \nn\\
&&
\quad \left. + \int_{\R} \int_\tau^{\tau+h}|\partial_x P_1 +P_2|^4\, dt\, dx \RB \nn\\
&\leq& C_3 h^4 (\|u_x\|_{L^4}^4 +\|\partial_x P_1 +P_2\|_{L^4})^4\,,
\ees
where $C_i$'s are all positive constants, and $C_3$ depends on $E_0$.
Since $\|u_x\|_{L^4}$ and $\|\partial_x P_1 +P_2\|_{L^4}$ are both uniformly bounded, the map $t\rightarrow u(t,\cdot)$ is Lipschitz continuous under $L^4(\R)$ metric.

\medskip

\paragraph{{\bf Step 6.}}
Now, we prove that the function $u$ provides a weak solution of (\ref{nv}).
We denote
\[
\Lambda=[0,\infty)\times\mathbb{R}\qquad\hbox{and}\qquad \hat\Lambda=\Lambda\cap\{ (T,Y)|\cos \frac{v}{2}(T,Y)\neq 0\}.
\]

For any test function where $\phi(x,t)\in C^1_c(\Lambda)$, using  \eqref{uyt} and $u_Y=0$ when $\cos\frac{v}{2}=0$ which is given by \eqref{ueq}, we have
\bes
0&=&\iint_{\Lambda} \left\{u_{YT} \phi_T+\Big(\frac{3}{8} u \sin^2 v +(-u^3+P_1 +\partial_x P_2)\cos^4{\frac{v}{2}}\Big)\xi\phi\right\}\, dY\, dT\nonumber\\
	&=&	\iint_{\Lambda} \left\{-u_Y \phi_T+\Big(\frac{3}{8} u \sin^2 v +(-u^3+P_1 +\partial_x P_2)\cos^4{\frac{v}{2}}\Big)\xi\phi\right\}\, dY\, dT\nonumber\\
	&=&\iint_{\hat\Lambda} \left\{-u_Y \phi_T+\Big(\frac{3}{8} u \sin^2 v +(-u^3+P_1 +\partial_x P_2)\cos^4{\frac{v}{2}}\Big)\xi\phi\right\}\, dY\, dT\nonumber\\
	&=&	\iint_{\Lambda} \left\{-u_x \phi_T+(-\frac{3}{2}u u_x^2-u^3+P_1 +\partial_x P_2)\phi\right\}\, dx\, dt\nonumber\\
	&=&	\iint_{\Lambda}
	\left\{-u_x\, \bigl(\phi_t+u^2\,\phi_x)+(-\frac{3}{2}u u_x^2-u^3+P_1 +\partial_x P_2)\phi\right\}\, dx\, dt\nonumber
	\,,\label{proof_thm}
\ees
which is exactly \eqref{nv_weak}.

Now we induce the Radon measures $\{\mu_{(t)},\, t\in \R^+\}$: For any Lebesgue  measurable set $\{x\in\mathcal{A}\}$ in $\R$, supposing the corresponding pre-image set of transformation is $\{Y\in \mathcal{G}(\mathcal{A})\}$, one has
\[
\mu_{(t)}(\mathcal{A})=\int_{\mathcal{G}(\mathcal{A})} (\xi\sin^4{\frac{v}{2}})\,(t,Y)\,dY\,.
\]
Note for every $t$, $u(t,\cdot)\in W^{1,4}$, the set $\{x\,| \,\cos{\frac{v}{2}}(t,x)=0\}$ is of Lebesgue measure zero. Hence, for every $t\in \R^+$,  the absolutely continuous part of $\mu_{(t)}$ w.r.t. Lebesgue measure has density $u_x^4(t,\cdot)$ by \eqref{X_y}.

We remark that (\ref{weak_en}) is correct. In fact, by \eqref{semi},
\bes
-\int_{\R^+} \left\{ \int(\phi_t+u^2\phi_x)d\mu_{(t)}\,\right\}dt
&=&-\iint_{\Lambda}\sin^4{\frac{v}{2}}\cdot\xi\,\phi_T\,dY\,dT\label{en_law_pf}\\
&=&\iint_{\Lambda}(\sin^4{\frac{v}{2}}\cdot\xi)_T\,\phi\,dY\,dT\nonumber\\
&=&\iint_{\Lambda}4\Big(u^3 - (P_1+\partial_x P_2)\Big)\cos{\frac{v}{2}}\, \sin^3{\frac{v}{2}}\cdot\xi\,\phi\,dY\,dT\nonumber\\
&=&\iint_{\hat\Lambda}4\Big(u^3 - (P_1+\partial_x P_2)\Big)\cos{\frac{v}{2}}\, \sin^3{\frac{v}{2}}\cdot\xi\,\phi\,dY\,dT\nonumber\\
&=&\iint_{\Lambda}\Big(4u^3 u_x^3-4u_x^3 (P_1+\partial_x P_2)\Big)\phi\,dx\,dt\,.\nonumber
\ees

\paragraph{{\bf Step 7.}}
Similar as \eqref{en_law_pf}, the second energy $\nu_{(t)}(\R)$ is conserved by \eqref{cons F new}.
Finally we prove that
for almost every $t\in \R^+$, the singular part of $\nu_{(t)}$ is concentrated on the set where $u=0$.

In fact, when blowup happens, $\cos{\frac{v}{2}}=0$, so
\bel{vtu_s}
v_T=-u
\eeq
which is nonzero only when $u$ is non-zero. Hence, by similar proof as in \cite{BC2}, we can prove that for almost every $t\in \R^+$, the singular part of $\nu_{(t)}$ is concentrated on the set where $u=0$.

\begin{remark}\label{rem}
(i) Note that the result proved in Step 7 is different from the one for Camassa-Holm equation. In fact, in Camassa-Holm equation, $v$ is defined in a similar way as we do in this paper, while when singularity happens,
\[
v_T=-1
\]
which is never zero. Because of this transversality property, the energy conservative solution for the  Camassa-Holm equation has no energy concentration for almost every time.

However, for the Novikov equation, energy density $\nu_{(t)}$ might be concentrated on a set of time whose measure is not zero. When energy concentration of $\nu_{(t)}$ happens, some characteristics tangentially touch each other, then stay together for a period of time. On this piece of characteristic,  $\cos{\frac{v}{2}}=0$ and $u=0$. For the Camassa-Holm equation, when characteristics meet tangentially, they will separate immediately.

We believe that this difference is caused by the nonlinearity of the wave speed $c(u)$ for Novikov equation. In fact, $c(u)=u$ so $c'(u)\equiv 1 $ for the Camassa-Holm equation and $c(u)=u^2$ so $c'(u)=2u$ for the Novikov equation. Another nonlinear equation with cusp singularity and nonlinear wave speed is the variational wave equation
\bel{vw}
u_{tt}-c(u)(c(u)u_x)_x=0, \qquad 0<C_L<c<C_R
\eeq
for some positive constants $C_L$ and $C_R$. This equation can be derived from liquid crystal, elasticity, etcs. The global existence and uniqueness of the H\"older continuous energy conservative solution for the Cauchy problem of \eqref{vw} were established in \cite{BCZ2, BZ, HR}. Especially, for almost every time, the energy concentration happens on the set where $c'(u)=0$, cf. \cite{BZ}.

(ii) We also want to point out that in contrast to the CH case, where the energy functional $\mathcal{E}$ is conserved for almost all time, here we have from \eqref{cons E} and \eqref{cons F} an a.e. in $t$ conservation of $\mathcal{F}$ and an exact conservation of $\mathcal{E}$. This is due to the improved regularity of the solution: roughly speaking, we have $\int_{\{\cos{v\over2} = 0\}} u^2_x \ dx \leq \|u_x\|^{1/2}_{L^4} \left| \{\cos{v\over2} = 0\} \right|^{1/2} = 0$.
\end{remark}
%

\section{Uniqueness for solution of (\ref{nv})-(\ref{ID})}

In this section, we prove that the energy conservative weak solution $u(t,x)$ satisfying Definition
\eqref{def1} is unique, based on the frameworks established in \cite{BCZ, BCZ2}.

The key idea is to introduce some new energy variable $\beta$, then we prove that $u(t,x)$ satisfies a semi-linear system, which is very similar to \eqref{semi}, under independent variables $(t,\beta)$. Using the uniqueness of the solution to the new semi-linear system, we prove the uniqueness of energy conservative weak solution of \eqref{nv}-\eqref{ID}. However, as discussed in Remark \ref{rem}, solutions for Novikov equation might be much more singular than those for Camassa-Holm equation. So we need to introduce some techniques suitable for nonlinear wave speed.

Here, for any time $t$ and $\beta\in \R$, we define
$x(t,\beta)$
to be the unique point $x$ such that
\bel{xgen}
x(t,\beta)+\mu_{(t)}\bigr\{(-\infty, x)\bigl\}~\leq~ \beta~\leq~x(t,\beta)
+\mu_{(t)}\bigl\{ (-\infty, x]\bigr\}.
\eeq
Hence,
\bel{beta_def}
\beta= x(t,\beta)+\mu_{(t)}\bigr\{(-\infty, x)\bigl\} + \theta\cdot\mu_{(t)}\bigr\{x\bigl\}
\eeq
for some $\theta\in[0,1]$.

Notice  at every time where $
\mu_{(t)}$ is absolutely continuous with density $u_x^4$ w.r.t.~Lebesgue measure,
the above definition gives that
\[
\beta=x(t,\beta)+\int_{-\infty}^{x(t,\beta)}u_x^4(t,\eta)~d\eta.
\]
The next lemma, together with Lemma~\ref{lem3},
establishes the Lipschitz continuity of $x$ and $u$ as functions
of the variables $t,\beta$.
\v
\begin{lemma}\label{lem1} { Let $u=u(t,x)$ be a conservative solution of \eqref{nv}-\eqref{ID}.
Then, for every $t\ge 0$,  the  maps
$\beta\mapsto x(t,\beta)$ and $\beta\mapsto u(t,\beta)\doteq u(t, x(t, \beta))$
implicitly defined by (\ref{xgen})
are Lipschitz continuous with Lipschitz constant $1$.  The map $t\mapsto x(t,\beta)$
is also Lipschitz continuous with a constant depending only on
$\|u_0\|_{H^1}$ and $\|u_0\|_{W^{1,4}}$.
}
\end{lemma}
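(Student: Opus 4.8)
The plan is to establish each Lipschitz bound by controlling the relevant increments through the energy measure $\mu_{(t)}$ and the conservation laws already in hand. First I would treat the spatial regularity, i.e.\ the two maps $\beta\mapsto x(t,\beta)$ and $\beta\mapsto u(t,\beta)$ at a fixed time $t$. From the defining relation \eqref{xgen}, for $\beta_1<\beta_2$ with corresponding points $x_1=x(t,\beta_1)\le x_2=x(t,\beta_2)$, the quantity $\beta$ increases by at least the $x$-displacement plus the accumulated measure on the interval:
\[
\beta_2-\beta_1 \;\ge\; (x_2-x_1) + \mu_{(t)}\bigl\{(x_1,x_2)\bigr\} \;\ge\; x_2-x_1 .
\]
Since $\mu_{(t)}\ge 0$ this immediately gives $|x(t,\beta_2)-x(t,\beta_1)|\le |\beta_2-\beta_1|$, the Lipschitz constant $1$ for the map $\beta\mapsto x$. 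For the map $\beta\mapsto u$, I would write $u(t,\beta_2)-u(t,\beta_1)=\int_{x_1}^{x_2} u_x(t,\eta)\,d\eta$ and estimate this by H\"older's inequality against the density of the absolutely continuous part of $\mu_{(t)}$, which is $u_x^4$. The natural pairing is
\[
\left|\int_{x_1}^{x_2} u_x\,d\eta\right|
\le \LC\int_{x_1}^{x_2} u_x^4\,d\eta\RC^{1/4}(x_2-x_1)^{3/4}
\le \LC \mu_{(t)}\{(x_1,x_2)\}\RC^{1/4}(x_2-x_1)^{3/4}.
\]
Combining this with the bound $(x_2-x_1)+\mu_{(t)}\{(x_1,x_2)\}\le \beta_2-\beta_1$ and the elementary inequality $a^{1/4}b^{3/4}\le a+b$ for $a,b\ge 0$ yields $|u(t,\beta_2)-u(t,\beta_1)|\le \beta_2-\beta_1$, again with constant $1$. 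This exploits precisely the $W^{1,4}$ setting: the quartic energy density is what makes $u_x$ controllable along the $\beta$-variable.

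For the temporal regularity of $t\mapsto x(t,\beta)$, I would fix $\beta$ and compare $x(t_1,\beta)$ and $x(t_2,\beta)$ for $t_1<t_2$. The idea is that $x(t,\beta)$ tracks a characteristic-like curve, so its $t$-derivative should be governed by the characteristic speed $u^2$ together with the way the measure $\mu_{(t)}$ redistributes mass in time. Using the weak balance law \eqref{weak_en} for $\mu_{(t)}$, whose flux involves the source terms $4u^3u_x^3-4u_x^3(P_1+\partial_x P_2)$, one controls how the cumulative measure $\mu_{(t)}\{(-\infty,x)\}$ changes as $t$ varies. The bound on the characteristic speed is $\|u^2\|_{L^\infty}\le \mathcal{E}(0)$ from \eqref{control u}, and all the nonlocal terms $P_1,\partial_x P_2$ are uniformly bounded by \eqref{bound on P} in terms of $\mathcal{E}(0)$ and $\mathcal{F}(0)$, hence ultimately in terms of $\|u_0\|_{H^1}$ and $\|u_0\|_{W^{1,4}}$. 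Assembling these gives a Lipschitz constant for $t\mapsto x(t,\beta)$ depending only on these two norms, as claimed.

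The main obstacle will be the temporal estimate, specifically making rigorous the claim that $x(t,\beta)$ moves Lipschitz-continuously in $t$ despite the implicit, measure-theoretic definition \eqref{xgen} and the possible presence of a singular part of $\mu_{(t)}$ concentrated where $u=0$. At times of energy concentration the point $x(t,\beta)$ can lie inside an atom of $\mu_{(t)}$, and the parameter $\theta\in[0,1]$ in \eqref{beta_def} interpolates within that atom; I would need to argue that on such sets $u=0$ and $\cos\frac{v}{2}=0$ (as established in Step~7 and Remark~\ref{rem}), so that the characteristic speed $u^2$ vanishes there and no jump in $x$ is produced. Controlling the interaction between the continuous-in-time evolution of $\mu_{(t)}$ (guaranteed by Definition~\ref{def1}) and the selection of $x(t,\beta)$ through the cumulative distribution function is the delicate point; the positivity of $\mu_{(t)}$, the weak continuity of the family $\{\mu_{(t)}\}$, and the uniform flux bounds from \eqref{bound on P} are the tools I would lean on to push this through.
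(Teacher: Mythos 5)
Your treatment of the two spatial maps is correct and is essentially the paper's own argument: from \eqref{xgen} one gets
$x(t,\beta_2)-x(t,\beta_1)+\mu_{(t)}\bigl\{(x(t,\beta_1),x(t,\beta_2))\bigr\}\le\beta_2-\beta_1$,
which gives the contraction property of $\beta\mapsto x(t,\beta)$, and then $|u(t,\beta_2)-u(t,\beta_1)|\le\int_{x_1}^{x_2}|u_x|\,dx$ is controlled by $(x_2-x_1)+\mu_{(t)}\{(x_1,x_2)\}$. (The paper gets this last bound from the pointwise inequality $|u_x|\le 1+u_x^4$; your route via H\"older and $a^{1/4}b^{3/4}\le a+b$ is equivalent and equally valid.)

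The gap is in the temporal estimate, where your plan stops exactly where the real work begins. The paper's mechanism is a comparison (squeezing) argument that you never formulate: with $C_S$ an $L^1$ bound on the full source $4u^3u_x^3-4u_x^3(P_1+\partial_x P_2)$ (not just $L^\infty$ bounds on $P_1,\partial_x P_2$) and $C_\infty=\|u\|^2_{H^1}\ge\|u^2\|_{L^\infty}$, the balance law \eqref{weak_en} gives, for $t>\tau$ and $y=x(\tau,\beta)$,
\begin{equation*}
\mu_{(t)}\bigl\{(-\infty,\,y-C_\infty(t-\tau))\bigr\}~\le~\mu_{(\tau)}\bigl\{(-\infty,y)\bigr\}+C_S(t-\tau),
\end{equation*}
and then one checks that $y^-(t)\doteq y-(C_\infty+C_S)(t-\tau)$ satisfies $y^-(t)+\mu_{(t)}\{(-\infty,y^-(t)]\}\le\beta$, which by \eqref{xgen} and the strict monotonicity of $y\mapsto y+\mu_{(t)}\{(-\infty,y]\}$ forces $x(t,\beta)\ge y^-(t)$; the upper bound is symmetric. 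Note that this argument is completely insensitive to atoms of $\mu_{(t)}$ and to the parameter $\theta$ in \eqref{beta_def}, so the ``main obstacle'' you identify is a red herring. Worse, the fix you propose—invoking Step 7 and Remark~\ref{rem} to say that on atoms one has $u=0$, hence zero characteristic speed—is not legitimate here: that concentration property was established only for the particular solution constructed in Section 4 (and only for a.e.\ $t$), whereas Lemma~\ref{lem1} must hold for an \emph{arbitrary} conservative solution in the sense of Definition~\ref{def1}, whose singular part is a priori unconstrained; importing properties of the constructed solution into the uniqueness argument would make it circular. The proof must, and in the paper does, rely only on the items listed in Definition~\ref{def1}: positivity and weak time-continuity of $\mu_{(t)}$, the density $u_x^4$ of its absolutely continuous part, and the balance law \eqref{weak_en}.
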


\begin{proof}
{\bf Step 1.} Fix any time $t\geq 0$.  Then
the map
$$x~\mapsto ~\beta(t,x)$$
is right continuous and strictly increasing.  Hence it has a well defined, continuous,
nondecreasing
inverse $\beta\mapsto x(t,\beta)$.
If  $\beta_1<\beta_2$, then
\bel{b12}
x(t,\beta_2)- \-x(t,\beta_1)+\mu_{(t)}\bigl\{ (x(t,\beta_1)\,,~x(t,\beta_2))\bigr\}
~\le~
\beta_2-\beta_1\,.
\eeq
This implies
$$x(t,\beta_2)-x(t,\beta_1)~\leq~ \beta_2-\beta_1\,,$$
showing that the map $\beta\mapsto x(t,\beta)$ is a contraction.
\v
{\bf Step 2.} To prove the
 Lipschitz continuity of the map $\beta\mapsto u(t,\beta)$, assume $\beta_1<\beta_2$.
By (\ref{b12}) it follows
\begin{equation}\label{ulip}
\begin{split}
\LV u(t, x(t, \beta_2)) - u(t, x(t, \beta_1)) \RV & \leq \int^{x(t, \beta_2)}_{x(t, \beta_1)} |u_x| \ dx \\
& \leq \LB x(t,\beta_2)- \-x(t,\beta_1)+\mu_{(t)}
 \LCB (x(t,\beta_1)\,,~x(t,\beta_2))\RCB \RB \\
& \leq (\beta_2 - \beta_1).
\end{split}
\end{equation}

{\bf Step 3.} Next, we prove the Lipschitz continuity of the map
$t\mapsto x(t,\beta)$.    Assume $x(\tau,\beta) =y$.
We recall that the family of measures $\mu_{(t)}$ satisfies the balance law (\ref{weak_en}),
where for each $t$ the source term  $4u^3 u_x^3-4u_x^3 (P_1+\partial_x P_2)$ satisfies
\bel{stb}\|4u^3 u_x^3-4u_x^3 (P_1+\partial_x P_2)\|_{L^1(\R)}~\leq~C_S\,,\eeq
for some constant $C_S$ depending only on the $H^1$ and $W^{1,4}$ norm of $u$.
Furthermore,
\bel{uib}
\|u^2\|_{L^\infty(\R)}~\leq~C_\infty~\doteq~\|u\|^2_{H^1(\R)}\,.\eeq

Therefore, for $t>\tau$ we have
$$\mu_{(t)}\bigl\{(-\infty\,,~ y-C_\infty(t-\tau))\bigr\}\leq\mu_{(\tau)}\bigl\{(-\infty\,,~ y)\bigr\}+C_S(t-\tau)\,.$$
Defining $y^-(t)\doteq y-(C_\infty+C_S)(t-\tau)$, we obtain
\begin{equation*}
\begin{split}
y^-(t)+\mu_{(t)}\LCB (-\infty\,,~ y^-(t)] \RCB & \leq y-(C_\infty+C_S)(t-\tau)+\mu_{(\tau)} \LCB (-\infty\,,~ y) \RCB+C_S(t-\tau) \\
& \leq y+\mu_{(\tau)}\LCB (-\infty\,,~ y)\RCB \leq \beta
\end{split}
\end{equation*}
%
This implies $x(t,\beta)\geq y^-(t)$ for all $t>\tau$.    A similar
argument yields
$$x(t,\beta)\leq y^+(t)\doteq y+(C_\infty+C_S)(t-\tau),$$
proving the uniform Lipschitz continuity of the map $t\mapsto x(t,\beta)$.
\end{proof}

The next lemma shows that characteristics can be uniquely determined by an integral equation
combining the characteristic equation and balance law of $\mu_{(t)}$.

\begin{lemma}\label{lem2} { Let $u=u(t,x)$ be a conservative solution of \eqref{nv}-\eqref{ID}. Then, for any $\bar y\in\R$ there exists a unique
Lipschitz continuous map  $t\mapsto x(t)$ which
satisfies both
\bel{char}
\frac{d}{dt}x(t)=u^2(t,x(t)),\qquad x(0)=\bar y
\eeq
and
\begin{equation}\label{ytx2}
\begin{split}
&{d\over dt}\LB \mu_{(t)}\LCB (-\infty, x(t))\RCB + \theta(t,\bar y)\cdot\mu_{(t)}\LCB x(t)\RCB
\RB \\
&\quad = \int_{-\infty}^{x(t)}\left[ 4u^3 u_x^3-4u_x^3 (P_1+\partial_x P_2) \right](t,x)\ dx, \quad x(0) = \bar y,
\end{split}
\end{equation}
%
for some function $\theta\in[0,1]$ and a.e.~$t\geq 0$.
In addition, for any $0\le \tau\leq t$ one has
\bel{ucar}
u(t, x(t)) - u(\tau, x(\tau)) ~=~-\int_\tau^t (\partial_xP_1 + P_2)(s, x(s))\, ds\,.\eeq
}
\end{lemma}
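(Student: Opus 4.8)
The plan is to pass to the energy variable $\beta$ of \eqref{xgen}--\eqref{beta_def}, in which the characteristic dynamics becomes Lipschitz although the transport speed $u^2$ is only H\"older in $x$. Write $S:=4u^3u_x^3-4u_x^3(P_1+\partial_xP_2)=4u_x^3\LC u^3-P_1-\partial_xP_2\RC$ for the source in \eqref{weak_en}; by \eqref{bound on P} the factor $u^3-P_1-\partial_xP_2$ is uniformly bounded in $L^\infty$, so $|S|\le C|u_x|^3$, while $\|u\|_{L^\infty}^2\le\mathcal{E}(0)$. By Lemma~\ref{lem1} the map $x\mapsto\beta(t,x)$ is invertible with inverse $x(t,\beta)$, and $\partial_\beta x(t,\beta)=(1+u_x^4)^{-1}\in[0,1]$ at points of absolute continuity of $\mu_{(t)}$, with flat pieces over its atoms. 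Given any solution $t\mapsto x(t)$ of \eqref{char}--\eqref{ytx2} with weight $\theta(t)$, set $\beta(t):=x(t)+\mu_{(t)}\{(-\infty,x(t))\}+\theta(t)\mu_{(t)}\{x(t)\}$, so that $x(t)=x(t,\beta(t))$ by \eqref{beta_def}. Differentiating and inserting \eqref{char} and \eqref{ytx2} gives
\beq\label{wdef}
\dot\beta=u^2(t,x(t))+\int_{-\infty}^{x(t)}S\,dx=:w(t,\beta(t)),\qquad w(t,\beta):=u^2(t,x(t,\beta))+\int_{-\infty}^{x(t,\beta)}S\,dx.
\eeq
Thus every characteristic produces a solution of the single scalar ODE $\dot\beta=w(t,\beta)$.

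The heart of the argument is that $w(t,\cdot)$ is Lipschitz, uniformly on bounded time intervals; this is where the energy variable regularizes the H\"older speed. The local part is controlled because $\beta\mapsto u(t,x(t,\beta))$ is $1$-Lipschitz by Lemma~\ref{lem1} and $|u|\le\mathcal{E}(0)^{1/2}$, so $\beta\mapsto u^2$ is Lipschitz. For the nonlocal part one differentiates under the integral, $\partial_\beta\int_{-\infty}^{x(t,\beta)}S\,dx=S\,\partial_\beta x=S/(1+u_x^4)$, and the decisive bound is
\[
\frac{|S|}{1+u_x^4}\le C\,\frac{|u_x|^3}{1+u_x^4}\le C',
\]
so the nonlocal part is Lipschitz as well, with constants depending only on $\mathcal{E}(0)$ and $\mathcal{F}(0)$. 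The Novikov-specific difficulty, absent for Camassa--Holm, is that $u^2$ degenerates at $u=0$, so that atoms of $\mu_{(t)}$ carried by the singular energy may persist and must be tracked by $\theta$; along such atoms $u=0$ and $v=-\pi$ hold simultaneously.

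Since $w$ is Lipschitz in $\beta$ and continuous in $t$, the problem $\dot\beta=w(t,\beta)$, $\beta(0)=\bar\beta:=\bar y+\int_{-\infty}^{\bar y}u_{0,x}^4\,dx$ (finite and unambiguous because $\mu_{(0)}$ is absolutely continuous) has a unique solution by Picard--Lindel\"of. Uniqueness of the characteristic then follows at once: two solutions of \eqref{char}--\eqref{ytx2} through $\bar y$ give, via \eqref{wdef}, two solutions of this ODE with datum $\bar\beta$, hence coincide, so $x_1(t)=x(t,\beta(t))=x_2(t)$. For existence I would reverse the transfer: set $x(t):=x(t,\beta(t))$, which is Lipschitz as a composition. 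Computing the genuine time-derivative of $G(t):=\beta(t)-x(t)=\mu_{(t)}\{(-\infty,x(t))\}+\theta(t)\mu_{(t)}\{x(t)\}$ from the transport structure of $\mu$ in \eqref{weak_en} yields $\dot G=\int_{-\infty}^{x(t)}S\,dx+(\dot x-u^2)\rho$, with $\rho\ge0$ the local mass density of $\mu_{(t)}$ at $x(t)$; comparing with $\dot x+\dot G=\dot\beta=w$ forces $(\dot x-u^2)(1+\rho)=0$, i.e.\ \eqref{char}, whence \eqref{ytx2}. The main obstacle is precisely this step together with the Lipschitz bound above: one must justify differentiating $G$ through the singular part of $\mu_{(t)}$ and choose $\theta$ continuously at wave-breaking times, which is exactly the mechanism by which \eqref{ytx2} selects the correct characteristic among the (possibly many) Peano solutions of \eqref{char}.

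Finally, for the transport identity \eqref{ucar} I would integrate the equation of motion along the characteristic: by \eqref{nv} and \eqref{char},
\[
\frac{d}{dt}u(t,x(t))=(u_t+u^2u_x)(t,x(t))=-(\partial_xP_1+P_2)(t,x(t)),
\]
whose right-hand side is bounded by \eqref{bound on P}; integrating from $\tau$ to $t$ gives \eqref{ucar}. Since $u$ is only H\"older, this chain rule is justified through the weak form \eqref{nv_weak} (equivalently the relation $u_T=-\partial_xP_1-P_2$ of \eqref{semi}) together with the Lipschitz-in-$L^4$ dependence of $t\mapsto u(t,\cdot)$, which makes $t\mapsto u(t,x(t))$ absolutely continuous with the stated derivative.
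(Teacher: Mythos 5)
Your reduction of \eqref{char}--\eqref{ytx2} to the single scalar integral equation for $\beta(t)$, the Lipschitz bound on its right-hand side (boundedness of $|S|\,x_\beta\le C|u_x|^3/(1+u_x^4)$ off the atoms, vanishing of the derivative over the atoms), and the Picard--Lindel\"of argument giving uniqueness of $\beta(\cdot)$, hence of $x(t)=x(t,\beta(t))$, reproduce Steps 1--3 of the paper's proof, and that part of your proposal is sound. The genuine gap is the step you yourself call ``the main obstacle'': showing that the curve $x(t)\doteq x(t,\beta(t))$ built from the unique solution of the $\beta$-equation actually satisfies \eqref{char}. Your argument rests on the formal identity $\dot G=\int_{-\infty}^{x(t)}S\,dx+(\dot x-u^2)\rho$, where $\rho$ is ``the local mass density of $\mu_{(t)}$ at $x(t)$''. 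But precisely at the times where characteristics collide, $\mu_{(t)}$ has a nontrivial singular part and no such density exists; those are exactly the times at which the conclusion is in doubt (away from them $\mu_{(t)}$ is locally $u_x^4\,dx$ and the claim is unproblematic). So the proposal assumes a differentiation-through-the-measure formula at the only points where it is nontrivial, and flags it rather than proves it. The paper fills this hole with an argument of a different kind, by contradiction and without ever invoking a pointwise density: if $\dot x(\tau)=u^2(\tau,x(\tau))+2\varepsilon_0$ at a differentiability point, one introduces the comparison line $x^+(t)$ of \eqref{xpmt}, inserts the Lipschitz test functions $\varphi^\epsilon=\min\{\varrho^\epsilon,\chi^\epsilon\}$ into the measure-valued balance law \eqref{weak_en}, uses the weak continuity of $t\mapsto\mu_{(t)}$ to pass to the limit and obtain the one-sided mass inequality \eqref{key1}, and then contradicts the first-order expansion \eqref{3.18} of $\beta(t)$ coming from the integral equation; the error term obeys $|o_1(t-\tau)|\le C(t-\tau)^{5/4}$, which already uses the $L^4$ control of $u_x$.

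The proof of \eqref{ucar} has a similar gap. You justify the chain rule $\frac{d}{dt}u(t,x(t))=-(\partial_xP_1+P_2)(t,x(t))$ by appealing to ``the relation $u_T=-\partial_xP_1-P_2$ of \eqref{semi}'' and to the Lipschitz dependence of $t\mapsto u(t,\cdot)$ in $L^4$. Neither suffices: the system \eqref{semi} was derived for the solution constructed in Sections 3--4, whereas here $u$ is an \emph{arbitrary} conservative solution in the sense of Definition~\ref{def1} (this is the whole point of the uniqueness argument), so that relation is part of what must be established; and $L^4$-Lipschitz continuity in time does not give differentiability of $u$ along one specific curve. The paper instead derives \eqref{ucar} directly from the weak form: integrate \eqref{nv_weak} by parts in $x$ to obtain \eqref{CHW2}, extend it to Lipschitz test functions, test with the moving functions $\psi^\epsilon$ of \eqref{pen}, and show the commutator term vanishes via the estimate $\epsilon^{-1}\bigl(2\|u\|_{L^\infty}\epsilon^{3/4}\|u_x\|_{L^4}\bigr)^2\le 4\|u\|^4_{W^{1,4}}\epsilon^{1/2}\to0$. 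This is where the $W^{1,4}$ regularity is genuinely used, and it cannot be replaced by the soft statements you invoke; both of these test-function arguments are the actual content of the lemma beyond the ODE reduction you carried out.
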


\begin{proof} {\bf Step 1.} Using the adapted  coordinates $(t, \beta)$, we
write the characteristic
starting at $\bar y$ in the form $t\mapsto x(t)=x(t, \beta(t))$, where
$\beta(\cdot)$ is a map to be determined.
By summing up (\ref{char}) and (\ref{ytx2}) and integrating w.r.t.~time
we obtain
\bel{ieq}\bega{l}\ds
\beta(t)~=~\ds x(t) + \mu_{(t)}\bigr\{(-\infty,x(t))\bigr\}+\theta(t)\cdot\mu_{(t)}\bigr\{x(t)\bigl\}\cr\cr
\qquad\ =~\ds\bar \beta
 + \int_0^t  \int_{-\infty}^{x(s)}[2u u_x + 4u^3 u_x^3-4u_x^3 (P_1+\partial_x P_2)  ](s,x)\, dx\,ds\,,
\enda
\eeq
with
\bel{bbeta}\bar\beta~=~\bar y + \int_{-\infty}^{\bar y}
u_{0,x}^4(x)\, dx\,.\eeq
Introducing the
function
\bel{Gdef}
G(t, \beta)~ \doteq~
\int_{-\infty}^{x(t,\beta)}[2u u_x + 4u^3 u_x^3-4u_x^3 (P_1+\partial_x P_2) ]
\, dx\,,\eeq
then
we can rewrite the equation (\ref{ieq}) in the form
\bel{bt}
\beta(t)~=~\bar\beta+\int_0^t G(s,\beta(s))\, ds\,.\eeq
The equation (\ref{bt}) is the starting point for our analysis.
In the following steps we will show that this integral equation has a unique solution
$t\mapsto \beta(t)$.
Moreover the corresponding function $t\mapsto x(t,\beta(t))$  satisfies the other two equations in the Lemma.

{\bf Step 2.} For each fixed $t\geq 0$,
the function $\beta\mapsto G(t,\beta)$
defined at (\ref{Gdef}) is
uniformly bounded and absolutely continuous.
Furthermore, by \eqref{beta_def} and \eqref{Gdef},
\[
G_\beta(t,\beta)=0\,\quad\hbox{when}\quad
 \mu_{(t)}\{x(t,\beta)\}\neq0,
 \]
 and elsewhere
\begin{eqnarray}
G_\beta=[2u u_x + 4u^3 u_x^3-4u_x^3 (P_1+\partial_x P_2) ]\, x_\beta\ \in ~[-C,\,C]
\label{Glip}
\end{eqnarray}
for some constant $C$ depending only on the $H^1$ and $W^{1,4}$ norm of $u$.
Hence the function $G$ in (\ref{Gdef}) is uniformly Lipschitz continuous w.r.t.~$\beta$.
\v
{\bf Step 3.} Thanks to the Lipschitz continuity of the function $G$, the existence of a unique
solution to the
integral equation (\ref{bt})
can be proved by a standard fixed point argument. The detail can be found in \cite{BCZ}.

 \v
\begin{figure}[htbp]
\centering
  \includegraphics[width=4in]{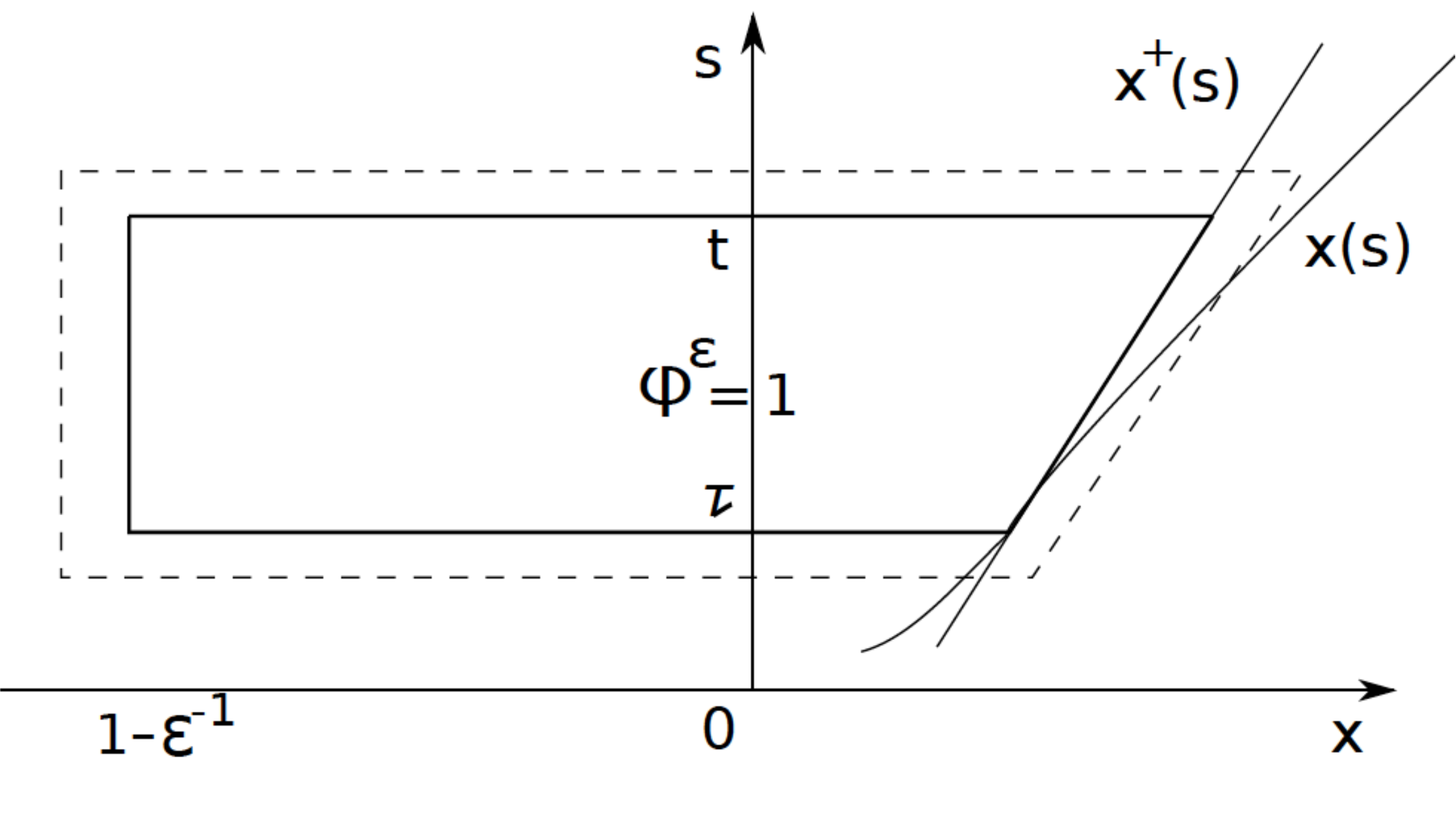}
       \caption{\small Test function $\vp^{\epsilon}$ introduced at (\ref{pen1}).  In the region outside the dashed box, $\vp^{\epsilon}=0$. }
\label{f:wa15}
\end{figure}

{\bf Step 4.} By the previous construction, the map
$t\mapsto x(t)\doteq x(t, \beta(t))$ defined by \eqref{xgen} provides the unique solution to (\ref{ieq}). Due to the Lipschitz continuity of $\beta(t)$ and $x(t)=x(t, \beta(t))$, $\beta(t)$ and $x(t)$ are differentiable almost everywhere, so we only have to consider the time where $x(t)$ is differentiable.

It suffices to show that
 (\ref{char}) holds at almost every time.
Assume, on the contrary, $\dot x(\tau)\not= u^2(\tau, x(\tau))$.
Without loss of generality, let
 \bel{ass}\dot x(\tau)~=~ u^2(\tau, x(\tau))+2\ve_0\eeq
 for some $\ve_0>0$.   The case $\ve_0 <0$ is entirely similar.
To derive a contradiction we
observe that, for all $t\in (\tau, \tau+\delta]$,
with $\delta>0$ small enough one has
\bel{xpmt}x^+(t)~\doteq~
x(\tau) +(t-\tau) [u^2(\tau, x(\tau))+\ve_0 ]~<~x(t)\,.\eeq
We also observe that if $\vp$ is
Lipschitz continuous with compact support then (\ref{weak_en}) is still true.

For any $\epsilon>0$ small, we can still use the below test functions used in \cite{BCZ}.
$$\rho^{\epsilon}(s,y)~\doteq~\left\{\bega{cl} 0 \qquad &\hbox{if}\quad y
\leq -\epsilon^{-1},\cr
(y+\epsilon^{-1}) \qquad &\hbox{if}\quad -\epsilon^{-1}\leq y\leq 1-\epsilon^{-1},\cr
1\qquad &\hbox{if}\quad 1-\epsilon^{-1} \leq y\leq x^+(s),\cr
1-\epsilon^{-1}(y-x(s))\qquad &\hbox{if}\quad  x^+(s)\leq y\leq x(s)^++\epsilon,\cr
0\qquad &\hbox{if}\quad y\geq x^+(s)+\epsilon,\enda\right.$$
\bel{timtest}\chi^\epsilon(s)~\doteq~\left\{\bega{cl} 0\qquad
&\hbox{if}\quad s\leq \tau-\epsilon,\cr
\epsilon^{-1}(s-\tau+\epsilon)\qquad &\hbox{if}\quad \tau-\epsilon\leq s\leq \tau,\cr
1\qquad &\hbox{if}\quad \tau\leq s\leq  t,\cr
1-\epsilon^{-1}(s-t) \qquad &\hbox{if}\quad t\leq s<t+\epsilon,\cr
0 \qquad &\hbox{if}\quad s\geq t+\epsilon.\enda\right.
\eeq

Define \bel{pen1}\varphi^{\epsilon}(s,y)~\doteq~\min\{ \varrho^{\epsilon}(s,y),
\,\chi^\epsilon(s)\}.\eeq

Using $\vp^{\epsilon}$ as test function  in (\ref{weak_en}) we obtain
\bel{vpe}
\int \Big[\int(\vp^{\epsilon}_t+u^2\vp^{\epsilon}_x)d\mu_{(t)}
+\int \Big(4u^3 u_x^3-4u_x^3 (P_1+\partial_x P_2)\Big)\vp^{\epsilon}
\, dx\Big]  dt~=~0.
\eeq

Suppose $t$ is sufficiently close to $\tau$, then
for $s\in [\tau+\epsilon, \, t-\epsilon]$ one has
$$0 ~=~ \vp^{\epsilon}_t + [u^2(\tau, x(\tau))+\ve_0 ] \vp^{\epsilon}_x ~
\leq~\vp^{\epsilon}_t + u^2(s,x) \vp^{\epsilon}_x\,,$$
because  $u^2(s,x)<u^2(\tau, x(\tau))+\ve_0 $ by the H\"older continuity of $u$ and $\vp^{\epsilon}_x\leq 0$.

Since the family of measures $\mu_{(t)}$ depends continuously on $t$
in the topology of weak convergence, taking the limit of (\ref{vpe}) as
 $\epsilon\to 0$, we obtain
\bel{key1}\bega{rl}
& \mu_{(t)}\bigr\{(-\infty,x^+(t)]\bigr\}\cr\cr
& \ds \ \
\geq~
\ds \mu_{(\tau)}\bigr\{(-\infty,x^+(\tau)]\bigr\}
+ \int_\tau^t\int_{-\infty}^{x^+(s)}\Big(4u^3 u_x^3-4u_x^3 (P_1+\partial_x P_2)\Big)\,dxds
\cr\cr
&\ \ =~\ds\mu_{(\tau)}\bigr\{(-\infty,x(\tau)]\bigr\}
+ \int_\tau^t\int_{-\infty}^{x(s)}\Big(4u^3 u_x^3-4u_x^3 (P_1+\partial_x P_2)\Big)\,dxds+ o_1(t-\tau).
\enda
\eeq

Notice that the last term is a higher order infinitesimal,
 satisfying
${o_1(t-\tau)\over t-\tau}\to 0$ as $t\to \tau$.
Indeed
\begin{equation*}
\begin{split}
|o_1(t-\tau)| & = \LV \int_\tau^t \int_{x^+(s)}^{x(s)}\LC 4u^3 u_x^3-4u_x^3 (P_1+\partial_x P_2)\RC \,dx ds\RV \\
& \leq \LN \LB 4u^3-4 (P_1+\partial_x P_2)\RB \RN_{L^\infty} \cdot \int_\tau^t \int_{x^+(s)}^{x(s)}\, |u_x^3|\,dxds \\
& \leq \LN \LB 4u^3-4 (P_1+\partial_x P_2)\RB \RN_{L^\infty} \cdot \int_\tau^t (x(s)-x^+(s))^{1/4}\, \|u_x(s,\cdot)\|^3_{L^4}\,ds \\
& \leq C(t-\tau)^{5/4}.
\end{split}
\end{equation*}
%

For $t$ sufficiently close to $\tau$, we have
\bel{3.19}
\bega{lll}
\ds\beta(t)&\ds=&\ds x(t)+\mu_{(t)}\bigr\{(-\infty,x(t))\bigr\}+\theta(t)\cdot\mu_{(t)}\bigr\{x(t)\bigl\}
\cr\cr
&\ds \geq&\ds x(\tau)+(t-\tau)[u^2(\tau,x(\tau))+\ve_0 ]+\mu_{(t)}\bigr\{(-\infty,x^+(t)]\bigr\}
\cr\cr
&\ds \geq&\ds x(\tau)+(t-\tau)[u^2(\tau,x(\tau))+\ve_0 ]+
\mu_{(t)}\bigr\{(-\infty,x^+(\tau)]\bigr\}\cr\cr
&&\qquad \ds +\int_\tau^t\int_{-\infty}^{x(s)} \Big(4u^3 u_x^3-4u_x^3 (P_1+\partial_x P_2)\Big)\, dxds+ o_1(t-\tau)
\enda
\eeq
where we also use \eqref{key1} and \eqref{xpmt}.

On the other hand, by (\ref{Gdef}) and (\ref{bt}) a linear approximation yields
\bel{3.18}\beta(t) ~=~\beta(\tau) +(t-\tau) \left[ u^2(\tau, x(\tau))
 +\int_{-\infty}^{x(\tau)}\Big(4u^3 u_x^3-4u_x^3 (P_1+\partial_x P_2)\Big)\, dx\right]+ o_2(t-\tau)\,\eeq
with ${o_2(t-\tau)\over t-\tau}\to 0$ as $t\to \tau$.

Combining (\ref{3.18}) and (\ref{3.19}), we find
\beq\bega{l}
\ds\beta(\tau) +(t-\tau) \left[ u^2(\tau, x(\tau))
 +\int_{-\infty}^{x(\tau)}\Big(4u^3 u_x^3-4u_x^3 (P_1+\partial_x P_2)\Big)\, dx\right]+ o_2(t-\tau)\cr\cr
\quad \geq\ds x(\tau)+(t-\tau)[u^2(\tau,x(\tau))+\ve_0 ]+
\mu_{(t)}\bigr\{(-\infty,x^+(\tau)]\bigr\}\cr\cr
\qquad \ds +\int_\tau^t\int_{-\infty}^{x(s)} \Big(4u^3 u_x^3-4u_x^3 (P_1+\partial_x P_2)\Big)\, dxds+ o_1(t-\tau).
\enda
\eeq
Subtracting common terms, dividing both sides by $t-\tau$ and
letting $t\to \tau$, we achieve a contradiction.  Therefore, (\ref{char}) must hold.
As a consequence, (\ref{ucar}) is proved by (\ref{char}) and (\ref{ieq}).

\v
{\bf Step 5.}  We now prove (\ref{ucar}).
By (\ref{nv_weak}), for every test function $\phi\in \C^\infty_c(\R^2)$ one has
\bel{CHW1}
\int_0^\infty \int\left[u\phi_t +{u^3\over 3}\phi_x - (\partial_xP_1 + P_2)\phi\right]\, dxdt
+ \int u_0(x)\phi(0,x)\, dx~=~0\,.\eeq
Given any $\vp\in\C^\infty_c$, let $\phi=\vp_x$.
Since the map $x\mapsto u(t,x)$ is absolutely continuous, we can
integrate by parts w.r.t.~$x$ and obtain
\bel{CHW2}
\int_0^\infty \int\left[u_x\vp_t +u^2u_x\vp_x + (\partial_xP_1 + P_2)\vp_x\right]\, dxdt
+ \int u_{0,x}(x)\vp(0,x)\, dx~=~0\,.\eeq

By an
approximation argument, the identity (\ref{CHW2}) remains
valid for any test function $\vp$ which is
Lipschitz continuous with compact support.
For any $\epsilon>0$ sufficiently small, we thus consider the functions
$$\varrho^{\epsilon}(s,y)~\doteq~\left\{\bega{cl} 0 \qquad &\hbox{if}\quad y\leq -
\epsilon^{-1},\cr
y+\epsilon^{-1} \qquad &\hbox{if}\quad -\epsilon^{-1}\leq y\leq 1-\epsilon^{-1},\cr
1\qquad &\hbox{if}\quad 1-\epsilon^{-1} \leq y\leq x(s),\cr
1-\epsilon^{-1}(y-x(s))\qquad &\hbox{if}\quad  x(s)\leq y\leq x(s)+\epsilon,\cr
0\qquad &\hbox{if}\quad y\geq x(s)+\epsilon,\enda\right.$$

\bel{pen}\psi^{\epsilon}(s,y)~\doteq~\min\{ \varrho^{\epsilon}(s,y),\,\chi^\epsilon(s)\},\eeq
with $\chi^\epsilon(s)$ as in (\ref{timtest}).
We now use the test function
$\vp=\psi^{\epsilon}$  in (\ref{CHW2}) and let
$\epsilon\to 0$. Observing that the function $(\partial_xP_1 + P_2)$
is continuous, we obtain
\bel{Inteq}\bega{rl}
\ds \int_{-\infty}^{x(t)} u_x(t,x)\, dx&=~\ds\int_{-\infty}^{x(\tau)}u_x(\tau,x)\, dx
- \int_\tau^t (\partial_xP_1 + P_2)(s, x(s))\,ds\cr\cr
&\ds\qquad +\lim\limits_{\epsilon\to 0}\int_{\tau-\epsilon}^{t+\epsilon}\int_{x(s)}
^{x(s)+\epsilon} u_x(\psi^{\epsilon}_t+u^2\psi^{\epsilon}_x) dx ds\,.\cr\cr
\enda\eeq
To complete the proof it suffices to show that the last term on the right hand side of
(\ref{Inteq}) vanishes. Since $u_x\in \L^2$, Cauchy's inequality yields
\bel{lexx}
\bega{l}
\ds\left|\int_{\tau-\epsilon}^{t+\epsilon}\int_{x(s)}^{x(s)+\epsilon}
u_x(\psi^{\epsilon}_t+u^2\psi^{\epsilon}_x) dx ds\right|\cr\cr
\qquad \leq~\ds\int_{\tau-\epsilon}^{t+\epsilon}\left(\int_{x(s)}^{x(s)+\epsilon}
 |u_x|^2dx\right)^{1/2}\left(\int_{x(s)}^{x(s)+\epsilon}
(\psi^\epsilon_t+u^2\psi^\epsilon_x)^2dx\right)^{1/2}ds\,.\cr\cr
\enda
\eeq
We only have to show that
\[
\int_{x(s)}^{x(s)+\epsilon}
(\psi^\epsilon_t+u^2\psi^\epsilon_x)^2 dx\rightarrow 0,\qquad \hbox{as} \qquad\epsilon\rightarrow 0.
\]
In fact, for every time $s\in [\tau-\epsilon,t+\epsilon]$ by construction we have
$$\psi^{\epsilon}_x(s,y)~=~\epsilon^{-1},
\qquad \psi^{\epsilon}_t(s,y)+u^2(s, x(s))\psi^{\epsilon}_x(s,y)~=~0,
$$
when $x(s)<y<x(s)+\epsilon$. This implies
\bel{luxx}\bega{l}\ds\qquad\int_{x(s)}^{x(s)+\epsilon}
|\psi^{\epsilon}_t(s,y)+u^2(s,y)\psi^{\epsilon}_x(s,y)|^2dy\cr\cr
\qquad \quad =\ds\epsilon^{-2}\int_{x(s)}^{x(s)+\epsilon}
|u^2(s,y)- u^2(s, x(s))|^2dy\cr\cr
\qquad \quad \leq~\ds \epsilon^{-1}\cdot
\left(\max_{x(s)\leq y\leq x(s)+\epsilon}|u^2(s,y)- u^2(s, x(s))|\right)^2
\cr\cr
\qquad \quad \leq~\epsilon^{-1}\cdot
\left(\int_{x(s)}^{x(s)+\epsilon} |2uu_x(s,y)|\, dy\right)^2\cr\cr
\qquad \quad \leq~\epsilon^{-1}\cdot
\left(2\|u(s)\|_{\L^\infty}\epsilon^{3/4}\cdot \|u_x(s)\|_{\L^4}\right)^2~\leq
4\|u(s)\|_{W^{1,4}}^4\epsilon^{1/2}\rightarrow0,
\enda\eeq
as $\epsilon\rightarrow 0$.
\v
{\bf Step 6.} The uniqueness of the solution $t\mapsto x(t)$ now becomes clear.
\end{proof}

\v
Relying on (\ref{ucar}) we can now show the Lipschitz continuity of $u$ w.r.t.~$t$, in the
auxiliary coordinate system.
\begin{lemma}\label{lem3} { Let $u=u(t,x)$ be a conservative solution of Novikov equation.
Then the  map $(t,\beta)\mapsto u(t,\beta)\doteq u(t, x(t, \beta))$
is Lipschitz continuous, with a constant depending only on the norm $\|u_0\|_{H^1}$ and $\|u_0\|_{W^{1,4}}$.  }
\end{lemma}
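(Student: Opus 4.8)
The plan is to establish the \emph{joint} Lipschitz continuity of $(t,\beta)\mapsto u(t,\beta)$ by splitting it, via the triangle inequality, into a spatial (fixed-time) estimate and a temporal (fixed-$\beta$) estimate:
\[
|u(t_2,\beta_2)-u(t_1,\beta_1)|\le |u(t_2,\beta_2)-u(t_2,\beta_1)|+|u(t_2,\beta_1)-u(t_1,\beta_1)|.
\]
The first term is already controlled by Lemma~\ref{lem1} (see \eqref{ulip}), which gives $|u(t_2,\beta_2)-u(t_2,\beta_1)|\le|\beta_2-\beta_1|$. Hence the entire content of the lemma reduces to proving that, for each fixed $\beta$, the map $t\mapsto u(t,\beta)=u(t,x(t,\beta))$ is Lipschitz in $t$ with a constant depending only on $\|u_0\|_{H^1}$ and $\|u_0\|_{W^{1,4}}$.

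To prove the temporal estimate, I would fix $\beta$ and $0\le\tau<t$ and set $y\doteq x(\tau,\beta)$. Invoking Lemma~\ref{lem2} produces the unique characteristic $s\mapsto x(s)$ with $x(\tau)=y$, along which the representation \eqref{ucar} holds; combined with the uniform bounds \eqref{bound on P} on $P_2$ and $\partial_xP_1$ this yields
\[
|u(t,x(t))-u(\tau,y)|\le\int_\tau^t|(\partial_xP_1+P_2)(s,x(s))|\,ds\le C\,(t-\tau).
\]
The point $y$ carries the label $\beta$ by \eqref{xgen}, but the key difficulty is that $\beta$ is \emph{not} conserved along a characteristic: because the balance law \eqref{weak_en} for $\mu_{(t)}$ has a nonzero source, the label $\beta(s)$ attached to $x(s)$ drifts according to the integral equation \eqref{bt}. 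Thus $u(t,x(t))$ equals $u(t,\beta(t))$ rather than the desired $u(t,\beta)$, and one cannot apply \eqref{ucar} at a frozen value of $\beta$.

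The resolution is to quantify this drift. By Step~2 of Lemma~\ref{lem2} the function $G$ in \eqref{Gdef} is uniformly bounded, $\|G\|_{L^\infty}\le C$, with $C$ depending only on the $H^1$ and $W^{1,4}$ norms of $u$ (the three contributions $2uu_x$, $4u^3u_x^3$ and $4u_x^3(P_1+\partial_xP_2)$ are each controlled in $L^1$ by \eqref{L^3 control} and \eqref{bound on P}). Since $\beta(\tau)=\beta$, the integral equation \eqref{bt} then gives $|\beta(t)-\beta|=|\int_\tau^tG(s,\beta(s))\,ds|\le C(t-\tau)$. Applying the fixed-time $\beta$-contraction of Lemma~\ref{lem1} once more,
\[
|u(t,\beta)-u(t,x(t))|=|u(t,\beta)-u(t,\beta(t))|\le|\beta-\beta(t)|\le C(t-\tau).
\]
Adding the two displayed bounds via the triangle inequality produces $|u(t,\beta)-u(\tau,\beta)|\le C(t-\tau)$, which together with the spatial estimate completes the proof. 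I expect the only genuine obstacle to be the conceptual one just described---recognizing that the characteristic does not preserve $\beta$ and that the resulting discrepancy must be absorbed through the uniform bound on $G$; once that observation is in place, the estimates are routine consequences of results already established.
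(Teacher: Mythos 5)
Your proposal is correct and follows essentially the same route as the paper's own proof: the paper likewise splits $|u(t,x(t,\bar\beta))-u(\tau,\bar\beta)|$ by the triangle inequality around the characteristic whose label $\beta(t)$ drifts according to \eqref{bt}, bounds the drift $|\beta(t)-\bar\beta|\le C(t-\tau)$ using the uniform bound on $G$ together with the fixed-time contraction \eqref{ulip}, and controls the variation along the characteristic by \eqref{ucar}. Your writeup is, if anything, slightly more explicit than the paper's in separating the spatial and temporal estimates and in flagging that $\beta$ is not conserved along characteristics, but the ingredients and decomposition are identical.
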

\v
\begin{proof}
Using  (\ref{ulip}), (\ref{bt}), and  (\ref{ucar}), and we obtain
\begin{equation*}
\begin{split}
& \LV u(t, x(t,\bar\beta))- u(\tau,\bar\beta) \RV \\
& \quad \leq \LV u(t, x(t,\bar\beta))- u(t,x(t,\beta(t))) \RV + \LV u(t, x(t,\beta(t)))- u(\tau, x(\tau,\beta(\tau))) \RV \\
& \quad \leq {1\over2}|\beta(t)-\bar\beta| + C(t-\tau)\\
& \quad \leq C(t-\tau)
\end{split}
\end{equation*}
where $C$ is a constant depending only on the norm $\|u_0\|_{H^1}$ and $\|u_0\|_{W^{1,4}}$.
\end{proof}

The next result shows that the solutions $\beta(\cdot)$ of (\ref{bt}) depend Lipschitz
continuously on the initial data.

\begin{lemma}\label{lem4}  { Let $u$ be a conservative solution to
Novikov equation. Call $t\mapsto \beta(t;\tau,\bar \beta)$ the solution to the
integral equation
\bel{bt2}
\beta(t)~=~\bar \beta+\int_\tau^t G(\tau, \beta(\tau))\, d\tau,
\eeq
with $G$ as in (\ref{Gdef}).    Then there exists a constant $C$
such that, for any two initial data
$\bar \beta_1,\bar\beta_2$ and any $t,\tau\geq 0$ the corresponding solutions satisfy
\bel{bil}
|\beta(t;\tau,\bar\beta_1)-
\beta(t;\tau,\bar\beta_2)|~\leq~e^{C|t-\tau|}\,|\bar\beta_1-\bar\beta_2|.\eeq
}
\end{lemma}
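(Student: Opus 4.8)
The plan is to treat (\ref{bt2}) as a standard integral equation of ODE type and to derive the Lipschitz dependence on initial data by a Gronwall argument, using the uniform Lipschitz continuity of $G$ in its second argument recorded in (\ref{Glip}).

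First I would fix $\tau\ge 0$ together with two initial data $\bar\beta_1,\bar\beta_2$, and denote by $\beta_i(t)=\beta(t;\tau,\bar\beta_i)$, $i=1,2$, the corresponding solutions produced by Lemma \ref{lem2}. Subtracting the two copies of (\ref{bt2}) then gives, for $t\ge \tau$,
\[
\beta_1(t)-\beta_2(t)=(\bar\beta_1-\bar\beta_2)+\int_\tau^t\bigl[G(s,\beta_1(s))-G(s,\beta_2(s))\bigr]\,ds.
\]

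Next I would invoke the estimate from Step 2 of the proof of Lemma \ref{lem2}: the map $\beta\mapsto G(t,\beta)$ is uniformly Lipschitz with $G_\beta\in[-C,C]$, where $C$ depends only on $\|u_0\|_{H^1}$ and $\|u_0\|_{W^{1,4}}$. Since these norms are controlled for all times by the conservation of $\mathcal{E}$ in (\ref{0cons E}) and the bound (\ref{L^4 control}) on $\|u_x\|_{L^4}$, the constant $C$ is in particular independent of $s$. Consequently $|G(s,\beta_1(s))-G(s,\beta_2(s))|\le C\,|\beta_1(s)-\beta_2(s)|$, and taking absolute values in the identity above yields
\[
|\beta_1(t)-\beta_2(t)|\le|\bar\beta_1-\bar\beta_2|+C\int_\tau^t|\beta_1(s)-\beta_2(s)|\,ds.
\]

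Finally I would apply Gronwall's inequality to the scalar quantity $t\mapsto|\beta_1(t)-\beta_2(t)|$ to obtain $|\beta_1(t)-\beta_2(t)|\le e^{C(t-\tau)}|\bar\beta_1-\bar\beta_2|$ for $t\ge\tau$; running the same computation backward handles the case $t<\tau$ and supplies the factor $e^{C(\tau-t)}$, so (\ref{bil}) holds with $|t-\tau|$ in the exponent in every case. The argument is essentially routine once (\ref{Glip}) is available, so I do not anticipate a genuine obstacle; the only point deserving care is to confirm that the Lipschitz constant of $G$ in $\beta$ is truly uniform in the time variable over the whole half-line, which is exactly what the time-independent a priori bounds on $\|u(s)\|_{H^1}$ and $\|u(s)\|_{W^{1,4}}$ guarantee.
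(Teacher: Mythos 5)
Your proposal is correct and coincides with the paper's intended argument: the paper's proof of Lemma~\ref{lem4} consists of the single remark that the result follows from the Lipschitz continuity of $G$ with respect to $\beta$ (established in Step 2 of the proof of Lemma~\ref{lem2}, see (\ref{Glip})), leaving the details to the reader. You have simply written out those details---subtraction of the two integral equations, the uniform-in-time Lipschitz bound on $G$, and Gronwall in both time directions---so there is nothing to add.
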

\v
\begin{proof} The proof is easy because of the Lipschitz continuity of $G$ with respect to $\beta$.
We leave it to the reader.
\end{proof}

\v
\begin{lemma}\label{lem5}
{ Assume $u\in H^1(\R)\cap W^{1,4}(\R)$. Then ${P_i}_x$, $i=1$ or $2$, is absolutely continuous  and satisfies
\bel{P1xx}
{P_1}_{xx}~=~{P_1}-\left(\frac{3}{2}u u_x^2+u^3\right).\eeq
and
\bel{P2xx}
{P_2}_{xx}~=~{P_2}-\frac{3}{2}u_x^3.\eeq}
\end{lemma}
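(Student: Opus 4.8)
The plan is to use the fact that $p=\tfrac12 e^{-|x|}$ is the fundamental solution of $1-\partial_x^2$; equivalently, for integrable $g$ one has the distributional identity $(p\ast g)_{xx}=p\ast g-g$, which is the very relation $p_{xx}\ast f=p\ast f-f$ already invoked after \eqref{ux2}. Writing $g_1:=\tfrac32 u u_x^2+u^3$, and letting $g_2$ denote the density convolved with $p$ in the definition \eqref{P12} of $P_2$, so that $P_i=p\ast g_i$, the two claimed formulas \eqref{P1xx}--\eqref{P2xx} are the instances $(P_i)_{xx}=P_i-g_i$ of this relation. Thus there are only two things to check: first, that $g_1,g_2\in L^1(\R)$, so that the convolutions are classical and $P_i\in L^\infty$; and second, that $(P_i)_x$ is genuinely \emph{absolutely continuous}, not merely a distribution.

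For the integrability I would invoke the one--dimensional embedding $H^1(\R)\hookrightarrow L^\infty(\R)$ together with $u_x\in L^2(\R)\cap L^4(\R)$. Then $u u_x^2\in L^1$ because $u\in L^\infty$ and $u_x^2\in L^1$; $u^3\in L^1$ because $u\in L^\infty$ and $u^2\in L^1$; and $u_x^3\in L^1$ by interpolation between $L^2$ and $L^4$, the explicit bound $\|u_x\|_{L^3}^3\le K$ being recorded in \eqref{L^3 control}. Hence $g_1,g_2\in L^1(\R)$.

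To establish absolute continuity and to extract the local term, I would split the kernel at the base point and write
\[
P_i(x)=\tfrac12 e^{-x}\!\int_{-\infty}^{x}\! e^{y}g_i(y)\,dy+\tfrac12 e^{x}\!\int_{x}^{\infty}\! e^{-y}g_i(y)\,dy .
\]
Since $g_i\in L^1$, the antiderivatives $A(x)=\int_{-\infty}^{x}e^{y}g_i\,dy$ and $B(x)=\int_{x}^{\infty}e^{-y}g_i\,dy$ are absolutely continuous with $A'=e^{x}g_i$ and $B'=-e^{-x}g_i$ a.e.; differentiating once, the boundary contributions cancel and
\[
(P_i)_x(x)=-\tfrac12 e^{-x}A(x)+\tfrac12 e^{x}B(x),
\]
which is a sum of products of locally Lipschitz factors with absolutely continuous ones, hence absolutely continuous on every bounded interval. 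Differentiating a second time and applying the fundamental theorem of calculus to $A$ and $B$, the two $-\tfrac12 g_i(x)$ boundary contributions combine into $-g_i(x)$ while the remaining terms reassemble into $P_i(x)$, giving $(P_i)_{xx}=P_i-g_i$ a.e., i.e. \eqref{P1xx}--\eqref{P2xx}.

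The only delicate point is the bookkeeping in this last step: one must justify differentiating the one--sided integrals at a point where the kernel $e^{-|x-y|}$ is merely Lipschitz, and verify that the jump of $\partial_x e^{-|x-y|}$ across $y=x$ is exactly what produces the local term $-g_i$. Once $g_i\in L^1$ is in hand this is routine, so the real substance of the lemma is the integrability of the cubic source terms, supplied by the $H^1\cap W^{1,4}$ regularity and the bound \eqref{L^3 control}.
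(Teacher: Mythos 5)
Your proposal is correct and follows essentially the same route as the paper: both rest on the distributional identity $p_{xx}=p-\delta_0$ for the kernel $p=\frac12 e^{-|x|}$, applied to the $L^1$ densities in \eqref{P12}; the paper simply states this identity and applies it, while you write out the routine verifications it leaves implicit (integrability of the cubic sources and the explicit kernel splitting giving absolute continuity of $(P_i)_x$). One remark: exactly as in your derivation, the paper's own argument yields $(P_2)_{xx}=P_2-\frac12 u_x^3$ since $P_2=\frac12\, p\ast u_x^3$, so the coefficient $\frac32$ appearing in \eqref{P2xx} is a typo in the statement rather than a discrepancy in your proof.
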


\begin{proof} The function $\phi(x)=e^{-|x|}/2$ satisfies the distributional identity
$$D^2_x\phi~=~\phi -\delta_0\,,$$
where $\delta_0$ denotes a unit Dirac mass at the origin.
Therefore, for every function $f\in \L^1(\R)$, the convolution satisfies
$$D^2_x(\phi*f)~=~\phi*f - f\,.$$
Choosing $f$ to be  $\frac{3}{2}u u_x^2+u^3$ and  $u_x^3$ we obtain the result.
\end{proof}

\v

\section{Proof of uniqueness}

The proof will be established in several steps.
\v
{\bf Step 1.}
By Lemmas \ref{lem1} and \ref{lem3},  the map $(t,\beta)\mapsto (x,u)(t,\beta)$ is
Lipschitz continuous.   An entirely similar argument shows that the maps
$\beta\mapsto G(t,\beta)\doteq
G(t,x(t,\beta))$
and $\beta\mapsto {P_i}_x(t,\beta)\doteq {P_i}_x(t, x(t,\beta))$ are also Lipschitz continuous.
By Rademacher's theorem, the partial derivatives
$x_t$,  $x_\beta$, $u_t$, $u_\beta$, $G_\beta$, $ {P_1}_{x,\beta}$ and $ {P_2}_{x,\beta}$ exist almost everywhere.
Moreover, a.e.~point
$(t,\beta)$ is a Lebesgue point for these derivatives.
Calling $t\mapsto \beta(t, \bar \beta)$ the unique solution to
the integral equation (\ref{bt}), by Lemma~\ref{lem4} for a.e.~$\bar \beta$  the following holds.
\v
\begi
\item[{\bf (GC)}] For a.e.~$t> 0$, except a measure zero set $\mathcal{N}\in \R^+$, the point
$(t, \beta(t,\bar\beta))$ is a Lebesgue point for the partial derivatives
$x_t,  x_\beta, u_t, u_\beta, G_\beta, P_{x,\beta}$.
\endi
\v
If (GC) holds, we then say that $t\mapsto
\beta(t,\bar\beta)$ is a {\bf good characteristic}.
\v
{\bf Step 2.}
We seek an ODE describing how the quantities
$u_\beta$ and $x_\beta$
vary along a good characteristic.
As in Lemma~\ref{lem4}, we denote by $t\mapsto \beta(t;\tau,\bar\beta)$ the solution to
(\ref{bt2}).  If $\tau,t\notin{\mathcal N}$, assuming that $\beta(\cdot; \tau,\bar \beta)$
is a good characteristic, differentiating (\ref{bt2}) w.r.t.~$\bar \beta$ we find
\bel{bode}
{\partial\over\partial\bar\beta} \beta(t;\tau,\bar\beta)~=~
1+\int_\tau^t G_\beta(s,\beta(s;\tau,\bar\beta))
\cdot{\partial\over\partial\bar\beta} \beta(s;\tau,\bar\beta)\,ds
\eeq
Next, differentiating w.r.t.~$\bar\beta$ the identity
$$x(t, \beta(t;\tau,\bar\beta))~=~x(\tau, \bar\beta)+\int_\tau^t
u^2(s,\,x(s,\beta(t;\tau,\bar\beta)))\, ds$$
 we obtain
\bel{xbode}
x_\beta(t, \beta(t;\tau,\bar\beta))\cdot
{\partial\over\partial\bar\beta} \beta(t;\tau,\bar\beta)~=~x_\beta(\tau,\bar\beta)
+\int_\tau^t (u^2)_\beta(s,\beta(s;\tau,\bar\beta))\cdot
{\partial\over\partial\bar\beta} \beta(s;\tau,\bar\beta)\,ds.
\eeq
Finally, differentiating w.r.t.~$\bar\beta$ the identity (\ref{ucar}), we obtain
\bel{ubode}
u_\beta(t, \beta(t;\tau,\bar\beta))\cdot
{\partial\over\partial\bar\beta} \beta(t;\tau,\bar\beta)~=~u_\beta(\tau,\bar\beta)
+\int_\tau^t (\partial_xP_1 + P_2)_\beta(s,\beta(s;\tau,\bar\beta))\cdot
{\partial\over\partial\bar\beta} \beta(s;\tau,\bar\beta)\,ds\,.\eeq

Combining (\ref{bode})--(\ref{ubode}),  we thus obtain the system of ODEs
\bel{ODES}\left\{\bega{cl}\ds
{d\over dt}\left[{\partial\over\partial\bar\beta} \beta(t;\tau,\bar\beta)
\right]&\ds=~G_\beta(t,\beta(t;\tau,\bar\beta))
\cdot{\partial\over\partial\bar\beta} \beta(t;\tau,\bar\beta),
\cr
\cr
\ds{d\over dt}\left[x_\beta(t, \beta(t;\tau,\bar\beta))\cdot
{\partial\over\partial\bar\beta} \beta(t;\tau,\bar\beta)
\right]&\ds=~(u^2)_\beta(t,\beta(t;\tau,\bar\beta))\cdot
{\partial\over\partial\bar\beta} \beta(t;\tau,\bar\beta),
\cr
\cr
\ds{d\over dt}\left[u_\beta(t, \beta(t;\tau,\bar\beta))\cdot
{\partial\over\partial\bar\beta} \beta(t;\tau,\bar\beta)
\right]&\ds=~(\partial_xP_1 + P_2)_{\beta}(t,\beta(t;\tau,\bar\beta))\cdot
{\partial\over\partial\bar\beta} \beta(t;\tau,\bar\beta).\enda\right.\eeq
In particular, the quantities within square brackets on the left hand sides
of (\ref{ODES}) are absolutely continuous.
{}From (\ref{ODES}), using Lemma~\ref{lem5} along a good characteristic we obtain
\bel{xubt}\left\{\bega{rl}\ds
{d\over dt}
x_\beta+G_\beta x_\beta& =~2uu_\beta\,,\cr\cr
\ds{d\over dt}u_\beta+G_\beta u_\beta&\ds =~(-\partial_xP_1 - P_2)_x\,{x_\beta}\cr
&\ds=~\left[-P_1-\partial_x P_2+\frac{3}{2}u u_x^2+u^3\right]x_\beta
\cr
&\ds
=\left[-P_1-\partial_x P_2+u^3\right]x_\beta+\frac{3}{2}u (x_\beta(1-x_\beta))^{1/2},
\enda\right.\eeq
where the first equation is obtained by the first two equations in $(\ref{ODES})$ and
 the second equation is obtained by the first  and third equations in $(\ref{ODES})$

\v
{\bf Step 3.} We now go back to the original
$(t,x)$ coordinates and derive an evolution equation
for the partial derivative
$u_x$ along a ``good" characteristic curve.

On any ``good" characteristic (GC): $x=\Gamma(t)$, we denote the set
\[
\mathcal{T}_s=\left\{t\in \R^+ \,\Big|\, |u_x(t,\Gamma(t))|<\infty\right\}
\]
which is equivalent to the set of time when $x_\beta>0$ on $x=\Gamma(t)$. We further define
\[
\mathcal{T}_u= \left\{t\in\R^+, t\not\in\mathcal{T}_s\right\}\,,
\]
\[
\mathcal{T}_{ui}= \hbox{inner point of } \mathcal{T}_u\,, \quad \mathcal{T}_{ub}=\hbox{boundary point of } \mathcal{T}_u = \overline{\mathcal{T}_u} \backslash \mathcal{T}_{ui}.
\]
and it is clear that
\[
\mathcal{T}_{ub}=\hbox{boundary point of } \mathcal{T}_s = \mathcal{T}_{sb}\,.
\]
Hence we have
\[
\R^+=\mathcal{T}_s\cup\mathcal{T}_u=\mathcal{T}_s\cup\mathcal{T}_{ui}\cup
\mathcal{T}_{ub}\, .
\]

The solution along the good characteristic will be constructed as follows. First, if $t\in\mathcal{T}_{ui}$, we define
\bel{vu_ui}
v\bigl(t,\Gamma(t)\bigr)=\pi,\qquad u\bigl(t,\Gamma(t)\bigr)=0\,.
\eeq
Here $u=0$ because an inner point of $\mathcal{T}_u$ only possibly
exists on the set of time when $u=0$ along any fixed GC.

Fix a point $(\tau,\bar x)$ with $\tau\in{\mathcal T}_s$, on which
$x_\beta>0$.
Assume that $\bar x$ is a Lebesgue point for the map  $x\mapsto u_x(\tau, x)$.
Let  $\bar \beta$ be such that $\bar x = x(\tau,\bar\beta)$
and assume that $t\mapsto \beta(t;\tau,\bar\beta)$ is a {\em good characteristic},
so that (GC) holds. We observe that
$$u_x^4(\tau,x)~=~{1\over x_\beta(\tau,\bar\beta)}-1~\geq~0\qquad\hbox{when}\qquad x_\beta(\tau,\bar\beta)~>~0\,.$$
As long as $x_\beta>0$, along the characteristic through $(\tau,\bar x)$ the
partial derivative $u_x$ can be computed as
\bel{uxx}
u_x\Big(t,x(t,\beta(t;\tau,\bar\beta))\Big)~=~{u_\beta(t,\beta(t;\tau,\bar\beta))\over
x_\beta(t,\beta(t;\tau,\bar\beta))}\,.\eeq
Using the two ODEs (\ref{xbode})-(\ref{ubode})
describing the evolution of $u_\beta$ and $x_\beta$, we conclude that the map
$t\mapsto u_x(t, x(t,\beta(t;\tau\bar\beta)))$ is absolutely continuous
(as long as $x_\beta\not=0$) and satisfies
\bel{u_xt}\bega{l}\ds
{d\over dt} u_x(t, x(t,\beta(t;\tau\bar\beta)))~=\ds~{d\over dt}\left(
{u_\beta}\over{x_\beta}\right)\cr\cr
\ds
\quad =~
\frac{x_\beta\left\{[P_1+\partial_x P_2+\frac{3}{2}u (\frac{1}{x_\beta}-1)^{1/2}+u^3]x_\beta-u_\beta G_\beta\right\}-u_\beta
\left\{2uu_\beta-x_\beta G_\beta\right\}}{x_\beta^2}\cr\cr
 \ds \quad =~-P_1-\partial_x P_2+\frac{3}{2}u (\frac{1}{x_\beta}-1)^{1/2}+u^3-\frac{u_\beta G_\beta}{x_\beta}-2u\frac{u_\beta^2}{x_\beta^2}+\frac{u_\beta G_\beta}{x_\beta}
\cr\cr
 \ds
\quad =~-P_1-\partial_x P_2+u^3-\frac{1}{2}u (\frac{1}{x_\beta}-1)^{1/2}.
\enda\eeq
In turn, as long as $x_\beta>0$ this implies
\bel{arctan}\bega{l}\ds
{d\over dt}\arctan u_x(t, x(t,\beta(t;\tau\bar\beta)))~=~{1\over 1+u_x^2}\cdot {d\over dt} u_x
\cr\cr
\ds\qquad = \left(P_1+\partial_x P_2-\frac{3}{2}u (\frac{1}{x_\beta}-1)^{1/2}-u^3-\frac{u_\beta^2}{x_\beta^2}\right)\frac{x_\beta^{1/2}}{x_\beta^{1/2}+(1-x_\beta)^{1/2}}
\cr\cr
\ds\qquad=~\left(-P_1-\partial_x P_2+u^3 \right)\frac{x_\beta^{1/2}}{x_\beta^{1/2}+(1-x_\beta)^{1/2}}-
\frac{1}{2}u\frac{(1-x_\beta)^{1/2}}{x_\beta^{1/2}+(1-x_\beta)^{1/2}}
\enda\eeq
\v
Now we consider the function
\beq
v~\doteq~\left\{
\bega{cl}
2\arctan u_x \qquad  &\hbox{if}\quad 0<x_\beta\leq 1,\\
\pi \qquad  &\hbox{if}\quad x_\beta=0.
\enda\right.
\eeq
Observe that this implies
\bel{a22}\frac{x_\beta^{1/2}}{x_\beta^{1/2}+(1-x_\beta)^{1/2}}~=~{1\over 1+u_x^2}~=~\cos^2{v\over 2}\,,
\eeq
and
\bel{a22_2}
\frac{(1-x_\beta)^{1/2}}{x_\beta^{1/2}+(1-x_\beta)^{1/2}}~=~{u_x^2
\over 1+u_x^2}~=~\sin^2{v\over 2}\,.\eeq
In the following,  $v$ will be regarded
as a map taking values in the unit circle $\S\doteq [-\pi,\pi]$
with endpoints identified.
We claim that, along each good characteristic, the map $t\mapsto v(t)\doteq
v(t, x(t,\beta(t;\tau\bar\beta)))$ is absolutely continuous and satisfies
\bel{Vt}
{d\over dt}v(t)~=~2\left(-P_1-\partial_x P_2+u^3 \right)\cos^2{v\over 2}-
u\sin^2{v\over 2}\,.\eeq
Indeed, denote by $x_\beta(t)$, $u_\beta(t)$ and $u_x(t)= u_\beta(t)/x_\beta(t)$
the
values of $x_\beta$, $u_\beta$, and $u_x$ along this particular characteristic.
Hence we have $x_\beta(t)>0$ when $t\in\mathcal{T}_s$ and $x_\beta(t)=0$ when $t\in\mathcal{T}_{ui}$. As a consequence, when  $t\in\mathcal{T}_{ui}$, by \eqref{Vt},
\[
0=\frac{d}{dt}v(t)=-u,
\]
hence $u=0$, which agrees with \eqref{vu_ui}.

If $\tau$ is any time where $x_\beta(\tau)>0$,
we can find a neighborhood
$I=[\tau-\delta, \tau+\delta]$ such that  $x_\beta(t)>0$ on $I$.
By (\ref{arctan}) and (\ref{a22}), $v= 2\arctan(u_\beta/x_\beta)$
is absolutely continuous  restricted to $I$ and satisfies (\ref{Vt}).
To prove our claim, it thus remains to show that $t\mapsto v(t)$
is continuous on the set $\mathcal{T}_{ub}$ of times where $x_\beta(t)=0$.

Suppose $t_0 \in \mathcal{T}_{ub} = \mathcal{T}_{sb}$, which implies $x_\beta(t_0)=0$. Take a sequence $\{t_n\}$ in $\mathcal{T}_s$ with limiting point $t_0$. By definition of $\mathcal{T}_s$ we have $x_\beta(t_n)>0$. Moreover \eqref{xubt} indicates that $x_\beta(t)$ is Lipschitz continuous in $\mathcal{T}_s$. Therefore from the identity
\beq
u_x^4(t_n)~=~{{1-x_\beta(t_n)}\over{x_\beta(t_n)}}\,,\eeq
it follows that as $t_n \to t_0$, $x_\beta(t_n)\to 0$ and $u_x^4(t_n)\to \infty$.   This implies $v(t)=2\arctan u_x(t)$
$\to \pm\pi$.  Since we identify the points $\pm\pi$, this establishes the
continuity of $v$ for all $t\geq 0$, proving our claim.
\v
{\bf Step 4.}  Let now $u=u(t,x)$ be a conservative solution.
As shown by the previous analysis, in terms of the variables $t,\beta$ the quantities
$x,u,v$ satisfy the semilinear system

\bel{xuv}\left\{\bega{cl}\ds
{d\over dt} \beta(t, \bar\beta)&=~G(t, \beta(t, \bar\beta)),\cr\cr
\ds {d\over dt} x(t,\beta(t,\bar\beta))&=~u^2(t,\beta(t,\bar\beta)),\cr\cr
\ds {d\over dt} u(t,\beta(t,\bar\beta))&=~\partial_xP_1 + P_2,\cr\cr
\ds {d\over dt} v(t,\beta(t,\bar\beta))&=~
2\left(-P_1-\partial_x P_2+u^3 \right)\cos^2{v\over 2}-
u\sin^2{v\over 2}\,.
\enda\right.\eeq
We recall that $P_1$, $P_2$ and $G$ were defined at (\ref{P12}) and (\ref{Gdef}), respectively.
Furthermore, $P_1$, $P_2$, $\partial_x P_1$ and $\partial_x P_2$ admits representations in terms of the variable $\beta$, namely
{\small
\bel{Pc1}\left.
\bega{l}\ds
P_1(x(\beta))
= {1\over 2}
\int_{-\infty}^\infty
\exp\left\{ -\left|
\int_\beta^{\beta'} \frac{\cos^4{v(\beta')\over 2}}{\sin^4{v(\beta')\over 2}+\cos^4{v(\beta')\over 2}}
\,ds\right|\right\} \cdot\cr\cr
\ds\qquad\qquad\qquad
\left[ \frac{3}{2}u(\beta')\frac{\sin^2{v(\beta')\over 2}\cos^2{v(\beta')\over 2}}{\sin^4{v(\beta')\over 2}+\cos^4{v(\beta')\over 2}}+
u^3\frac{\cos^4{v(\beta')\over 2}}{\sin^4{v(\beta')\over 2}+\cos^4{v(\beta')\over 2}}
\right]\,d\beta',
\enda\right.\eeq

\bel{Pc2}\left.
\bega{l}\ds
P_2(x(\beta))
= {1\over 2}
\int_{-\infty}^\infty
\exp\left\{ -\left|
\int_\beta^{\beta'} \frac{\cos^4{v(\beta')\over 2}}{\sin^4{v(\beta')\over 2}+\cos^4{v(\beta')\over 2}}
\,ds\right|\right\} \cdot\cr\cr
\ds\qquad\qquad\qquad
\left[ \frac{1}{2}\frac{\sin^3{v(\beta')\over 2}\cos{v(\beta')\over 2}}{\sin^4{v(\beta')\over 2}+\cos^4{v(\beta')\over 2}}
\right]\,d\beta',
\enda\right.\eeq

\bel{Pcx1}\left.
\bega{l}\ds
\partial_x P_1(x(\beta))
= {1\over 2}
\left(
\int_\beta^\infty - \int_{-\infty}^\beta\right)
\exp\left\{ -\left|
\int_\beta^{\beta'} \frac{\cos^4{v(\beta')\over 2}}{\sin^4{v(\beta')\over 2}+\cos^4{v(\beta')\over 2}}
\,ds\right|\right\} \cdot\cr\cr
\ds\qquad\qquad\qquad\quad
\left[ \frac{3}{2}u(\beta')\frac{\sin^2{v(\beta')\over 2}\cos^2{v(\beta')\over 2}}{\sin^4{v(\beta')\over 2}+\cos^4{v(\beta')\over 2}}+
u^3\frac{\cos^4{v(\beta')\over 2}}{\sin^4{v(\beta')\over 2}+\cos^4{v(\beta')\over 2}}
\right]\,d\beta',
\enda\right.\eeq

\bel{Pcx2}\left.
\bega{l}\ds
\partial_x P_2(x(\beta))
= {1\over 2}
\left(
\int_\beta^\infty - \int_{-\infty}^\beta\right)
\exp\left\{ -\left|
\int_\beta^{\beta'} \frac{\cos^4{v(\beta')\over 2}}{\sin^4{v(\beta')\over 2}+\cos^4{v(\beta')\over 2}}
\,ds\right|\right\} \cdot\cr\cr
\ds\qquad\qquad\qquad\quad
\left[ \frac{1}{2}\frac{\sin^3{v(\beta')\over 2}\cos{v(\beta')\over 2}}{\sin^4{v(\beta')\over 2}+\cos^4{v(\beta')\over 2}}
\right]\,d\beta',
\enda\right.\eeq

}
For every $\bar\beta\in \R$ we have the initial condition
\bel{ico}\left\{\bega{cl}\ds\beta(0,\bar\beta)&=~\bar\beta,\cr
x(0,\bar\beta)&=~x(0,\bar\beta),\cr
u(0,\bar\beta)&=~u_0(x(0,\bar\beta)),\cr
v(0,\bar\beta)&=~2\arctan u_{0,x}(x(0,\bar\beta)).\enda\right.
\eeq

To see the Lipschitz continuity of all coefficients, we only need to note one fact that
\[
\sin^4\frac{v}{2}+\cos^4 \frac{v}{2}=\sin^4\frac{v}{2}+(1-\sin^2 \frac{v}{2})^2\geq\frac{1}{2}.
\]
As a consequence,
the Cauchy problem (\ref{xuv}), (\ref{ico}) has a unique
solution, globally defined for all $t\geq 0$, $x\in \R$.
\v
{\bf Step 5.}
To complete the proof of uniqueness,   consider two conservative solutions
$u,\tilde u$ of the Novikov equation (\ref{nv})
with the same initial data $u_0\in H^1(\R)\cap W^{1,4}(\R)$.
For a.e.~$t\geq 0$  the
corresponding  Lipschitz continuous
maps $\beta\mapsto x(t,\beta)$, $\beta\mapsto \tilde x(t,\beta)$
are strictly increasing.  Hence they have continuous inverses, say
$x\mapsto \beta^* (t,x)$, $x\mapsto \tilde \beta^* (t,x)$.

By the previous analysis, the map    $(t,\beta)\mapsto (x,u,v)(t,\beta)$
is uniquely determined by the initial data $u_0$.  Therefore
$$x(t,\beta)~=~\tilde x(t,\beta),\qquad\qquad u(t,\beta)=\tilde u(t,\beta).$$
In turn, for a.e.~$t\geq 0$ this implies
$$u(t,x) ~=~u(t,\beta^*(t,x))~=~\tilde u(t,\tilde \beta^*(t,x))~=~\tilde u(t,x).$$
\endproof

\vglue .5cm

\noindent {\bf Acknowledgements.} The work of R.M. Chen was partially supported by the Central Research Development Fund No. 04.13205.30205 from University of Pittsburgh. The work of Y. Liu is partially supported by  NSF grant DMS-1207840.

\bigskip

Conflict of Interest: The authors declare that they have no conflict of interest.

%

\end{document}